\newtheorem{theorem}{Theorem}[section]
\newtheorem{lemma}[theorem]{Lemma}
\newtheorem{corollary}[theorem]{Corollary}
\newtheorem{proposition}[theorem]{Proposition}
\theoremstyle{definition}
\newtheorem{definition}[theorem]{Definition}
\newtheorem{example}[theorem]{Example}
\newtheorem{examples}[theorem]{Examples}
\def\bc{\begin{center}}
\def\ec{\end{center}}
\def\rad{\operatorname{Rad}}
\def\Mod{\operatorname{Mod}}
\def\Ext{\operatorname{Ext}}
\def\End{\operatorname{End}}
\def\rad{\operatorname{rad}}
\def\gr{\operatorname{gr}}
\def\Hom{\operatorname{Hom}}
\def\rk{\operatorname{rk}}
\begin{document}

\title{Approach to artinian algebras via natural quivers}
\author{Fang Li}
\address{Department of Mathematics, Zhejiang University,
Hangzhou, Zhejiang 310027, China}
\email{fangli@cms.zju.edu.cn}
\thanks{Project supported by the National Natural Science
Foundation of China (No. 10871170) and the Natural Science
Foundation of Zhejiang Province of China (No. D7080064)}
\author{ Zongzhu Lin} \address{Department of Mathematics,
Kansas State University, Manhattan, KS 66506, USA}
\email{zlin@math.ksu.edu}
\thanks{ The second author is supported in part by an NSA grant and NSF I/RD program}
\subjclass{Primary: 16G10, 16G20}

%\begin{document}
\maketitle

\begin{abstract}
Given an Artinian algebra $A$ over a field $k$, there are
several combinatorial objects associated to $A$. They are the
diagram $D_A$ as defined in [DK], the natural quiver $\Delta_A$
defined in \cite{Li} (cf. Section 2), and a generalized version of
$k$-species $(A/r, r/r^2)$ with $r$ being the Jacobson radical of
$A$. When $A$ is splitting over the field $k$, the diagram $D_A$
and the well-known ext-quiver $\Gamma_A$ are the same. The main
objective of  this paper is to investigate the relations among these
combinatorial objects and in turn to use these relations to give a
characterization of the algebra $A$.

%The natural quivers of artinian algebras were introduced in
%\cite{Li} and were used in \cite{Li}\cite{Liu}\cite{LC1}\cite{LW} to
%characterize the structures and representations of artinian algebras
%via generalized path algebra (see {\cite{CL}}) and pseudo-path
%algebra. In this relations among the natural quiver and the Ext-quiver of an artinian algebra and the
%associated generalized path algebra are discussed.  Furthermore, their relations with the diagram of a
%finite dimensional algebra defined by Drozd and Kirichenko are also discussed. While extension quivers  are invariant under Morita equivalence, the natural quivers closely reflect the isomorphism classes of an Artinian algebras. The main results are the formulae in Theorems \ref{th2.2} and \ref{th5.3}. As an application, we discuss the relationship between the representation-type of a hereditary artinian algebra and the
%type of the natural quiver.  And, we give a new explanation of
%generalized path algebra and pseudo-path algebra via the
%characterization of an artinian algebra $A$ which is splitting over
%its radical, see Corollary \ref{cor3.4}.  Moreover, it
%is found that under some condition, (not necessarily basic) finite
%dimensional hereditary algebras can be thought as the same as
%generalized path algebras, see Theorem \ref{theorem6.3}.
\end{abstract}

\section{Introduction}

\subsection{} Given an Artinian algebra $A$ over a field $k$, there are
several combinatorial objects associated to $A$. They are the
diagram $D_A$ as defined in [DK], the natural quiver $\Delta_A$
defined in \cite{Li} (cf. Section 2), and a generalized version of
$k$-species $(A/r, r/r^2)$ with $r$ being the Jacobson radical of
$A$. When $A$ is splitting over the field $k$, the diagram $D_A$
and the well-known ext-quiver $\Gamma_A$ are the same. The main
objective of  this paper is to investigate the relations among these
combinatorial objects and in turn to use these relations to give a
characterization of the algebra $A$.

%\subsection{}

%
%Let $\Gamma=(\Gamma_{0},\Gamma_{1})$ be a quiver. The quiver can be understood as a
%two sets $\Gamma_0$ and $\Gamma_1$ together with a map $\Gamma_1\rightarrow
%\Gamma_0\times \Gamma_0$ denoted by $a\mapsto (t(a), h(a))$ with $h(a)$ being
%called the {\em head} of the arrow $a$ and $t(a)$ being called the {\em tail} of
%$a$. For each pair $(i,j)\in \Gamma_0\times \Gamma_0$, we define
%\[ \Omega_{\Gamma}(i,j)=\{ a \in \Gamma_1\;|\; t(a)=j, h(a)=i\}.\]
%Note that $ \Gamma_1 $ is the disjoint union of all $ \Omega_{\Gamma}(i,j)$.

%Given a quiver $\Gamma=(\Gamma_0, \Gamma_1)$ and a
%field $k$, there
% is a path algebra
% $k\Gamma$. Path algebra has many nice properties
% as it is ``free" in a certain sense. In \cite{KY}, this property of
% path algebras are used to study noncommutative formal thin schemes.

For a given Artinian $k$-algebra $A$, let $\{S_{1},S_{2},\cdots, S_{s}\}$ be the complete set of
non-isomorphic irreducible $A$-modules. Set $D_i=\End_{A}(S_i)$ which is
a division ring  and $ \Ext^1_{A}(S_i,S_j)$ is a
$D_i$-$D_j$-bimodule.  $A$ is said to
{\em split} over the ground field $k$, or say, to {\em be $k$-splitting}, if
 $\dim_{k}\End_A(S_{i})=1$ (i.e., $D_{i}=k$ for all irreducible $A$-modules $S_{i}$.  Recall that a quiver is a finite directed graph $\Gamma=(\Gamma_{0}, \Gamma_{1})$ with vertex set $ \Gamma_{0}$ and arrow set $ \Gamma_{1}$.  For a $k$-splitting algebra $A$, one can define a finite quiver $\Gamma_A$  called the {\em Ext-quiver}
of $A$ by setting $\Gamma_{0}=\{1,2,\cdots,
  s\}$, and $m_{ij}=\dim_k(\Ext^1_A(S_{i},S_{j}))$ being the number of   arrows from $i$ to $j$.
%  If $A$ is not $k$-splitting,
%   one should consider valued quiver instead which we will not deal
%   with in this paper.

%When $k$ is an algebraically closed field, any finite dimensional
%algebra $A$ is always $k$-splitting. In the sequel, the condition
%that $A$ is $k$-splitting is enough for the results, so we will
%assume $A$ is splitting over $k$ until a special explanation is
%given.

%  Clearly, if $Q $ is a finite quiver without oriented cycles, the
%  Ext-quiver of the path
%  algebra $kQ$ is just $Q$.

 There is another to characterize the Ext-quiver for a $k$-splitting algebra Artinian algebra.  By \cite{A} and \cite{Liu}, when $A$ is a finite-dimensional algebra over
  a field $k$ and $1=\varepsilon_{1}+\cdots
  +\varepsilon_{s}$ a decomposition of $1$ into a sum of primitive
  orthogonal idempotents, then, we can re-index $\{S_{1},S_{2},\cdots,
  S_{s}\}$ such that $S_{i}\cong A\varepsilon_{i}/r\varepsilon_{i}$ where $r$ is the radical of $A$,
  and moreover,
  $\dim_{k}\Ext_A(S_{i},S_{j})=
  \dim_{k}(\varepsilon_{j}r/r^{2}\varepsilon_{i})$.

If $ A$ is basis and $k$-splitting, then $A$ is a quotient of the path algebra of $\Gamma_{A}$.
The properties of this quiver $\Gamma_A$, in particular, its relation to the representations of
$ A$ has been extensively studied in the field of representations of
algebras. $\Gamma_A$  is invariant under Morita
equivalence, i.e., if $A$ and $B$ are two Morita equivalent
$k$-algebras, then $\Gamma_A$ is isomorphic to $ \Gamma_B$.

If $A$ is not basic,  it is not longer isomorphic
to a quotient of the path algebra $k \Gamma_A$. It is discussed in this paper and \cite{Li} how to get the analogue of the Gabriel theorem in this case.

\subsection{} In recent years, geometric methods has been heavily used in
representation theory of algebras. To each finite dimensional algebra
$A$ over an algebraically closed field $k$, one can associate a
sequence of algebraic varieties $ \Mod^d_{k}(A)$ ($d=1, 2, 3
\dots,$) as closed subvarieties of the affine spaces $k^{d\times d}$.
The association of the varieties depends on the presentation of the
algebra $A$ using finitely many generators and finitely many relations.
In \cite{bongartz}, it is proved that two algebras $A$ and $B$ are isomorphic if
and only if the associated varieties $ \Mod^d_k(A)$ and
$\Mod^d_k(B)$ are isomorphic as $GL_d(k)$-varieties.  Thus
having a more standard presentation of the algebra $A$ will help with
studying these varieties. The purpose of these paper is to explore
relations of the Ext-quiver of $A$ and the natural quiver which will be
defined in Section 2. The natural quiver will have fewer arrows than
the Ext-quiver when the algebra $A$ is not basic. Natural quivers are
not invariant under the Morita equivalence and much
 closer to reflect the structure of the algebra,
 rather than just its module category. There are numerous
 cases even in the representation theory that one
 needs the structure of the algebras, for example,
 the character values of finite groups in a block cannot be
 preserved through Morita equivalence.

\subsection{} The paper is organized as follows. In Section 2 we
 recall the definition of the natural quiver  $\Delta_A$ of an Artinian
 $k$-algebra $A$ and provide a precise relation with the
Ext-quiver $\Gamma_A$ when the algebra $A$ is splitting over the
ground field $k$. In Section 3, we prove in Theorem~\ref{th3.3} that
any Artinian algebra, which is splitting over its radical, is a quotient of
the generalized path algebra of its natural quiver associated to $A/r$. This gives a presentation of the algebra $A$.
Although there is always a surjective algebra homomorphism from the
path algebra of the natural quiver $\Delta_A$ to the tensor algebra $
T_{A/r}(r/r^2)$,  the above surjective map to $A$ does not always
factor through $T_{A/r}(r/r^2)$ (Example 3.5).  There have been
numerous generalizations of Wedderburn-Malcev theorems
to characterize an Artinian algebra that is splitting over its radical.  By
using the generalized path algebra of the natural quiver, we give another characterization of an Artinian
algebra $A$ which is splitting over its radical, see Corollary
\ref{cor3.4}. Moreover, we discuss the relations among the natural
quiver and the Ext-quiver of an Artinian algebra and the associated
generalized path algebra, see the figure in the end of Section 4, and
furthermore, their relationship with the diagram of an Artinian algebra
as defined in  \cite{DK}. The main
results are the formulae in Theorems \ref{th2.2} and \ref{th5.3}. As
an application, in Section 5, we discuss the relationship between the diagram and natural quiver of an artinian algebra that is a not splitting over the ground field. In Section 6 we prove that a (not necessarily basic) hereditary Artinian algebra which is splitting over its radical is isomorphic to the generalized path algebras of its natural quiver provided  the defining ideal as described in  Theorem~\ref{th3.3} does not interest with the arrow space (see Theorem \ref{theorem6.3}).

{\bf Acknowledgement.}\; The authors take this opportunity to
express thanks to B.M.Deng, M.M.Zhang, Y.B.Zhang and H.Y.Zhu
for their helpful conversations and suggestions.

\section{The relation between natural quiver and Ext-quiver}
Suppose that $A$ is a left Artinian $k$-algebra, and $r=r(A)$ is its
Jacobson radical.
%
%Now, we introduce another quiver $\Delta_A$ associated to an
%Artinian algebra $A$, which is the so-called {\em natural quiver} (cf.
%\cite{LC1}).

 Write $A/r=\bigoplus_{i=1}^{s}{A}_{i}$ where
$ {A}_{i}$ are two-sided simple ideals of $A/r$. Such
decomposition of $ A/r$ is also called block decomposition of the
algebra $ A/r$. Then, $r/r^2$ is an
$A/r$-bimodule. Let $\;_{i}M_{j}={A}_{i}\cdot r/r^{2}\cdot
 {A}_{j}$ which is finitely generated as
$ {A}_{i}$-$ {A}_{j}$-bimodule for each pair
$(i,j)$.

For two rings $A$ and $B$,  and   a finitely generated
$A$-$B$-bimodule $M$, define $ \rk_{A, B}(M)$ to be the minimal number of generators
of $M$ as a $A$-$B$-bimodule among all generating sets.  As a convention, we always denote $ \rk_{A,B}(0)=0$.

 The isomorphism classes of
irreducible  $A$-modules is indexed by the set  $\Delta_{0}=\{1,\cdots,s\}$ corresponding to the set of blocks of $ A/r$. We now define the natural quiver $\Delta_{A}=(\Delta_{0}, \Delta_{1}) $ with $ \Delta_{0}$ being the vertex set and, for
$i,j\in \Delta_0$,  $t_{ij}=\rk_{A_{j},A_{i}}({}_jM_i)$ being the number of arrows from $i$ to
$j$ in $\Delta_{A}$. Obviously, there
is no arrow from $i$ to $j$ if $_{j}M_{i}=0$. This quiver $\Delta_A=(\Delta_{0},\Delta_{1})$ is
called the {\em natural quiver} of $A$.

The notion of natural quiver was first introduced in \cite{Li}, where
the aim  was to use the generalized path algebra from the
natural quiver of an Artinian algebra $A$ to characterize $A$ through
the generalized Gabriel theorem.
%In the further research (\cite{LC1},
%etc), one is motivated to study representations of an Artinian algebra
%via the associated generalized path algebra or pseudo path algebra
%but not the basic algebra of an Artinian algebra.
The advantage of
generalized path algebra is that valued quiver information is already
encoded in the generalized path algebras. In the language of
Kontsevich and Soilbelman \cite{KY}, Gabriel type algebra cannot
be stated as an affine non-commutative scheme which can be
embedded into a thin scheme in a sense that they are ``infinitesimally''
isomorphic. Result of this paper will be to find a ``smallest''
embedding.

%Clearly the number of vertices in the natural quiver $\Delta_A$ and
%the Ext-quiver $\Gamma_A$ of an Artinian algebra are always equal
%since $A$ and $A/r$ have the same number of irreducible modules.

For  a quiver $Q=(Q_{0}, Q_{1})$, a sub-quiver $Q'$  of $Q$ is called {\em dense} if  $(Q')_0=Q_0$
and for any vertices $i,j$,  there exist an arrow from $i$ to $j$ in
$Q'$ if and only if there exist an arrow from $i$ to $j$ in $Q$.

 When $A$ is splitting over the ground field $k$, then $ A_i\cong
M_{n_i}(k)$ and the irreducible module $S_{i}$ has $k$-dimension $n_{i}$. In this case the Ext-quiver $\Gamma_{A}$ of $A$ is defined (cf. 1.1).  It is proved in
  \cite[Prop. 7.4.3]{Liu} that
$$t_{ij}\leq m_{ij}\leq n_in_jt_{ij}.$$
%where $n_i=\dim_kS_i$ are
%integers such that  and $S_i$ is the irreducible $A$-module corresponding to
%$A_i$.
In addition, if $A$ is basic, then $\Delta_{A}=\Gamma_{A}$ as discussed in the 1.1
 In general these two constructions will give two different
 quivers  if $A$ is not basic. The following results were proved in  \cite{LC1}.

  \begin{proposition}
Let $A$ be a $k$-splitting Artinian $k$-algebra over a field $k$ and  $B$ is the corresponding basic algebra of $A$.  Then,
\begin{itemize}
\item[(i)] $\Gamma_A=\Gamma_B$;

\item[(ii)]  $\Gamma_B=\Delta_B$;

\item[(iii)]
 $\Delta_A$ is a dense sub-quiver of $\Gamma_A$, and thus a dense
 sub-quiver of $\Gamma_B$ and $\Delta_B$.
\end{itemize}
  \end{proposition}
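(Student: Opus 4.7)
The three parts of the proposition can be handled fairly independently, and all reduce to known facts combined with the displayed inequality $t_{ij}\leq m_{ij}\leq n_in_jt_{ij}$ recalled just above the statement.

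For part (i), my plan is to invoke the standard Morita-invariance of $\Ext^{1}$. If $F\colon A\text{-mod}\to B\text{-mod}$ is a Morita equivalence, then $F$ induces a bijection between the isomorphism classes of simple $A$-modules and simple $B$-modules, say $S_i\mapsto T_i$. Because $F$ is exact and preserves projective resolutions (sending projectives to projectives), we get $\Ext^{1}_{A}(S_i,S_j)\cong\Ext^{1}_{B}(T_i,T_j)$ as $k$-vector spaces. Since $A$ is $k$-splitting and $B$ is Morita equivalent to $A$, $B$ is also $k$-splitting, so the Ext-quivers $\Gamma_A$ and $\Gamma_B$ are both defined, and their arrow numbers $m_{ij}$ agree. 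Thus $\Gamma_A=\Gamma_B$ (up to the canonical identification of vertex sets).

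For part (ii), the key observation is that $B$ is basic and splitting, so in the block decomposition $B/\rad(B)=\bigoplus_{i=1}^{s}B_i$ each block satisfies $B_i\cong k$, i.e.\ $n_i=1$ for all $i$. Plugging $n_i=n_j=1$ into the inequality $t_{ij}\leq m_{ij}\leq n_in_jt_{ij}$ forces $t_{ij}=m_{ij}$ for every pair $(i,j)$. Since both quivers have the same vertex set $\{1,\dots,s\}$, this gives $\Delta_B=\Gamma_B$.

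For part (iii), I want to compare $\Delta_A$ with $\Gamma_A$. The vertex sets are identified (both index the simple $A$-modules, equivalently the blocks of $A/r$). Denseness amounts to the equivalence: there is an arrow $i\to j$ in $\Delta_A$ iff there is one in $\Gamma_A$, which is precisely $t_{ij}>0\Leftrightarrow m_{ij}>0$. This follows immediately from $t_{ij}\leq m_{ij}\leq n_in_jt_{ij}$: since $n_i,n_j\geq 1$, we have $m_{ij}=0$ forcing $t_{ij}=0$ via the first inequality, and $t_{ij}=0$ forcing $m_{ij}=0$ via the second. Combining with (i) and (ii) then gives that $\Delta_A$ is also dense in $\Gamma_B$ and $\Delta_B$.

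The only subtlety I expect is bookkeeping the bijection between simples of $A$ and $B$ and the matching block decompositions in (i); this is really the content of Morita theory and so is not a genuine obstacle. The rest is a direct application of the pre-established inequality, with no intricate computation required.
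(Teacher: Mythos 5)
Your argument is correct. A small caveat: the paper does not actually prove this proposition itself; it states it as ``proved in [LC]'' and merely recalls, just above the statement, the inequality $t_{ij}\leq m_{ij}\leq n_in_jt_{ij}$ from [Liu, Prop.~7.4.3]. So there is no in-text proof to compare against, but your route is the natural one: (i) follows from Morita-invariance of $\Ext^{1}$ together with the observation that Morita equivalence preserves the $k$-splitting property; (ii) follows either from your substitution $n_i=n_j=1$ into the inequality, or, more structurally, by noting that when each block $B_i\cong k$ the bimodule ${}_jM_i=\varepsilon_j(r/r^2)\varepsilon_i$ is just a $k$-vector space, so $t_{ij}=\rk_{k,k}({}_jM_i)=\dim_k\varepsilon_j(r/r^2)\varepsilon_i=m_{ij}$; and (iii) is exactly the equivalence $t_{ij}=0\Leftrightarrow m_{ij}=0$ read off the two sides of the inequality. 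Nothing is missing; the bookkeeping of the block/simple-module bijection under Morita equivalence is standard and you correctly flag it as the only point requiring care.
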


Now, we will just give the exact relation between $t_{ij}$ and
$m_{ij}$ when the algebra $A$ is splitting over $k$. First, we
recall that for any real number $a$, the ceiling of $a$ is defined
to be
$$\lceil a \rceil =\min\{ n \in \mathbb{Z} \;|\; n\geq a\}.$$

\begin{theorem}\label{th2.2}
Let $A$ be an Artinian $k$-algebra $A$ which is splitting over  $k$.
Assume $\Gamma_A$ is the Ext quiver of $A$ and $\Delta_A$ is
the natural quiver of $A$.  Then $t_{ij}=\lceil \frac{m_{ij}}{n_in_j}\rceil $ for $i,j\in (\Gamma_A)_0=(\Delta_A)_0$. Here
 $m_{ij}$ is the number of arrows from $i$ to $j$ in
$\Gamma_A$.
\end{theorem}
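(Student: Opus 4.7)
The plan is to convert the bimodule rank $t_{ij}$ into a dimension count via the semisimple structure of $A/r$, and then match with $m_{ij}$ using the $\Ext$-formula recalled in §1.

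Since $A$ is $k$-splitting, $A_i\cong M_{n_i}(k)$, so the enveloping algebra $A_j\otimes_k A_i^{\mathrm{op}}\cong M_{n_in_j}(k)$ is a simple $k$-algebra with a unique simple module $V\cong k^{n_j}\otimes_k k^{n_i}$ of $k$-dimension $n_in_j$. An $A_j$-$A_i$-bimodule is the same as a left module over this algebra, so ${}_jM_i\cong V^d$ for some $d\geq 0$. To identify $d$, decompose $e_i=\sum_{\ell=1}^{n_i}\varepsilon_{i,\ell}$ into orthogonal primitive idempotents, and pick $\varepsilon_{j,1}$ similarly. The idempotents $\varepsilon_{j,1}$ and $\varepsilon_{i,1}$ act as rank-one projections on the $k^{n_j}$ and $k^{n_i}$ factors of $V$ respectively, so $\varepsilon_{j,1}\cdot V\cdot\varepsilon_{i,1}$ is one-dimensional, and therefore $\dim_k\varepsilon_{j,1}(r/r^2)\varepsilon_{i,1}=\dim_k\varepsilon_{j,1}\cdot V^d\cdot\varepsilon_{i,1}=d$. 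The formula $\dim_k\Ext^1_A(S_i,S_j)=\dim_k\varepsilon_{j,1}(r/r^2)\varepsilon_{i,1}$ recalled in §1 then gives $d=m_{ij}$.

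The final step is the claim that the minimal number of bimodule generators of $V^d$ equals $\lceil d/(n_in_j)\rceil$. For the lower bound, any cyclic submodule of $V^d$ is a quotient of the regular left module of $M_{n_in_j}(k)$, which decomposes as $V^{n_in_j}$; hence each cyclic submodule has isotypic multiplicity at most $n_in_j$, so $t$ generators produce a submodule of multiplicity at most $tn_in_j$, forcing $t\geq\lceil d/(n_in_j)\rceil$. For the matching upper bound, I would partition the multiplicity space $k^d$ into $\lceil d/(n_in_j)\rceil$ blocks of size at most $n_in_j$ and, for each block of size $r$, exhibit a tensor $\sum_{\ell=1}^{r} v_\ell\otimes e_\ell\in V\otimes k^r$ with $v_1,\dots,v_r\in V$ linearly independent; such a tensor generates $V^r$ because the evaluation map $\End_k(V)\to V^r$, $a\mapsto(av_1,\dots,av_r)$, is surjective whenever the $v_\ell$ are linearly independent. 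Combining with $d=m_{ij}$ yields $t_{ij}=\lceil m_{ij}/(n_in_j)\rceil$.

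The identification of the bimodule structure and the dimension count are routine once the matrix-algebra framework is in place. The main obstacle is the minimal-generator computation in the last paragraph, and within that the upper-bound construction, which rests on the elementary but slightly delicate linear-algebra fact about surjectivity of the evaluation map $a\mapsto(av_1,\dots,av_r)$.
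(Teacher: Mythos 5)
Your proof is correct and follows essentially the same route as the paper's: reduce to counting simple composition factors of ${}_jM_i$ over the simple algebra $A_j\otimes_k A_i^{\mathrm{op}}\cong M_{n_in_j}(k)$, identify the multiplicity $d$ with $m_{ij}$ via the idempotent/Ext formula from §1, and then show the minimal number of bimodule generators of $V^{d}$ is $\lceil d/(n_in_j)\rceil$. The paper isolates that last step as a standalone lemma on modules over $M_n(D)$ and builds the upper-bound surjection $R^{\lceil m/n\rceil}\twoheadrightarrow L^{\oplus m}$ abstractly, whereas you exhibit an explicit cyclic generator $\sum v_\ell\otimes e_\ell$ with the $v_\ell$ linearly independent; these are the same idea in different clothing.
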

\begin{proof}
The proof involves computing a minimal generating set of
$_iM_j=A_i\cdot  (r/r^2 )\cdot A_j$ as
${A}_i$-$A_j$-bimodules. We first note that both
$A_i$ and $A_j$ are simple $k$-algebras and split over
$k$. Hence $A_i\otimes _k A_j^{op}$ is a also a simple
central $k$-algebra, isomorphic to $M_{n_in_j}(k)$. Since $r/r^2$ is
a semisimple left  $A_i$-module and  a semisimple right
$A_j$-module,  $_iM_j$ is a semisimple $A_i\otimes
_k A_j^{op}$-module with simple components isomorphic to $
S_i\otimes_{k} S_{j}^{op}$. Here $ S_{j}^{op}$ is the right irreducible
$A_j$-module. Let $ e_{kl}^{i}$ be the standard matrix basis
elements of $A_i=M_{n_i}(k)$. Then $\dim_{k}\Ext_{A}^1(S_i,
S_j)=\dim_{k}e_{11}^i r/r^2 e_{11}^{j}$. We claim that $_iM_j$ has
exactly $\dim_{k}e_{11}^i r/r^2 e_{11}^{j}$ many
$A_i\otimes_{k} A_j^{op}$-composition factors. Indeed  $
e_{11}^i\otimes e_{11}^j$ is a primitive idempotent of
$A_i\otimes_{k}A_j^{op}$. Now the theorem  follows from
the following lemma for simple Artinian rings.
\end{proof}
\begin{lemma} Let $R=M_n(D)$ be the ring of all $n\times n$-matrices
with entries in
 a division ring $D$ and $M$ be an $R$-module.
 Let $L=D^{n}$ be the natural irreducible $R$-module of column vectors.
 Then
 \begin{itemize}
\item[(i)] $M$ is a semisimple $R$-module isomorphic to
$L^{\oplus m}=L\oplus L\oplus \cdots \oplus L$, where
$m=\dim_{D}(e_{11}M)$;
\item[(ii)]  $M$ can be generated by $\lceil \frac{m}{n} \rceil $ many
elements over $R$, but cannot be generated by fewer number of elements.
    \end{itemize}
\end{lemma}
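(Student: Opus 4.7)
The plan is to exploit the Artin--Wedderburn structure of $R=M_n(D)$: it is a simple artinian ring whose unique simple module (up to isomorphism) is $L=D^n$, and the column decomposition $R=\bigoplus_{j=1}^n Re_{jj}$ gives an isomorphism $R\cong L^{\oplus n}$ as a left $R$-module. The matrix unit $e_{11}$ is a primitive idempotent with $e_{11}Re_{11}\cong D$, and the Morita functor $e_{11}(-)\colon R\text{-Mod}\to D\text{-Mod}$ sends $L$ to $e_{11}L=De_1\cong D$ on the right.

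For (i), semisimplicity of $R$ forces $M\cong L^{\oplus m}$ for some $m\ge 0$. Applying the exact additive functor $e_{11}(-)$ yields $e_{11}M\cong (e_{11}L)^{\oplus m}\cong D^{\oplus m}$, so $\dim_{D}(e_{11}M)=m$, which is the identification claimed. For the lower bound in (ii) I would argue that for any $v\in M$ the cyclic submodule $Rv$ is a cyclic quotient of ${}_RR$; semisimplicity turns this quotient into a direct summand of $R\cong L^{\oplus n}$, hence $Rv\cong L^{\oplus k_v}$ with $0\le k_v\le n$. If $M=Rv_1+\cdots+Rv_q$, comparing the numbers of simple summands gives $m\le k_{v_1}+\cdots+k_{v_q}\le qn$, so $q\ge \lceil m/n\rceil$.

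For the upper bound, set $q=\lceil m/n\rceil$ and write $m=(q-1)n+s$ with $1\le s\le n$. Identify $M$ with $L^{\oplus m}$ and partition its coordinates into $q-1$ blocks of size $n$ and one final block of size $s$. In a block of size $n$, take $w=(e_1,\dots,e_n)\in L^{\oplus n}$; since $A\cdot w=(Ae_1,\dots,Ae_n)$ reproduces the $n$ columns of the arbitrary matrix $A\in R$, one gets $Rw=L^{\oplus n}$. Similarly, $(e_1,\dots,e_s)\in L^{\oplus s}$ generates the final block. Placing these $q$ elements in the appropriate blocks (zero elsewhere) yields a generating set of the required size.

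The main obstacle is the lower bound in (ii): one must show that no cyclic $R$-submodule of $M$ can accommodate more than $n$ copies of $L$. This is exactly where the identification $R\cong L^{\oplus n}$ together with semisimplicity is used in an essential way. Once this structural constraint on cyclic modules is established, the ceiling $\lceil m/n\rceil$ and its sharpness both follow by elementary counting, and no further subtlety about left versus right $D$-actions is needed beyond tracking them carefully in the Morita step of (i).
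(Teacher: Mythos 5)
Your proposal is correct and follows essentially the same path as the paper: identify $M\cong L^{\oplus m}$ with $m=\dim_D(e_{11}M)$, use $R\cong L^{\oplus n}$ as a left $R$-module to bound composition factors from above by $n$ per generator for the lower bound, and partition $L^{\oplus m}$ into blocks of size at most $n$ for the upper bound. Your version is slightly more explicit (writing out the Morita functor $e_{11}(-)$ in (i) and exhibiting the generators $(e_1,\dots,e_n)$ concretely), whereas the paper phrases the lower bound as a single surjection $R^{\oplus t}\twoheadrightarrow M$, but these are the same argument.
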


\begin{proof} Since $R$ is simple, all finitely generated $R$-modules
 are semisimple. Note that every irreducible $R$-module is isomorphic to
$L=D^n$.  Then (i) follows from the fact that $ \dim_{D} e_{11}L=1$.
To show (ii), we note that  $R=\oplus_{s=1}^n Re_{ss}\cong L^{\oplus
n}$ as left $R$-module. If $M$ is generated by $t$-many elements,
then $M$ is a quotient of $R^{\oplus t}$ as a left $R$-module.
Hence, $M$ has at most $tn$ composition factors counting
multiplicity, i.e., $m\leq tn$ and $\lceil \frac{m}{n} \rceil\leq
t$. Thus $M$ cannot be generated by less than $\lceil \frac{m}{n}
\rceil$ many elements. On the other hand, let $N$ be any $R$-module
of length $p\leq n$. Then $N$ is isomorphic to $L^{\oplus p}$.
There is a surjective homomorphism $ \phi: R\rightarrow N$ of left
$R$-modules. By writing $m=tn+s$, with $0\leq s < n$, we have
$\lceil \frac{m}{n} \rceil=t $ if $s=0$ and $\lceil \frac{m}{n}
\rceil=t+1$ if $s>0$. Hence we can construct a surjective
homomorphism $R^{\lceil \frac{m}{n} \rceil}\rightarrow L^{\oplus
tn+s}=(L^{\oplus n})^{\oplus t}\oplus L^{\oplus s}$. Hence $M$ is
generated by $\lceil \frac{m}{n} \rceil$ many elements.
\end{proof}

As we note in Proposition 2.1 (ii), the Ext-quiver and the natural
quiver of a finite dimensional  basic algebra coincide each other.
As an application of Theorem \ref{th2.2}, we give an example which
means the coincidence is also possible to happen for some non-basic
algebras.

\begin{example} $\;$ Let $k$ be a field of
 characteristic different from $2$ and let $Q$ be the quiver:
 \begin{figure}[hbt]
\begin{picture}(50,10)(80,-10)
\put(100,0){\makebox(0,0){$ \bullet$}}
\put(102,-10){\makebox(0,0){$e_1$}}
 \put(105,0){\vector(1,0){30}}
\put(95,0){\vector(-1,0){30}}
\put(140,0){\makebox(0,0){$\bullet$}}
\put(144,-10){\makebox(0,0){$e_{2'}$}}
\put(145,0){\vector(1,0){30}}
\put(180,0){\makebox(0,0){$\bullet$}}\put(190,0){\makebox(0,0){$e_{3'}$}}
 \put(60,0){\makebox(0,0){$\bullet$}}
 \put(62,-10){\makebox(0,0){$e_2$}}
\put(55,0){\vector(-1,0){30}}\put(40,5){\makebox(0,0){$\beta$}}\put(85,5)
{\makebox(0,0){$\alpha$}}
\put(120,5){\makebox(0,0){$\alpha'$}}\put(160,5){\makebox(0,0){$\beta'$}}
\put(20,0){\makebox(0,0){$\bullet$}}\put(12,0){\makebox(0,0){$e_3$}}
\end{picture}
\end{figure}
\end{example}

Let $\Lambda=kQ$ be the path algebra  of $Q$ and
$G=\langle\sigma\rangle$ be the automorphism group of $Q$
of order $2$. Then
$\sigma$ defines a $k$-algebra automorphism of $kQ$.   Now, we
consider the Ext-quiver and the natural quiver of the skew group
algebra $\Lambda G$ (see \cite A).

Let $r$ be the Jacobson radical of $\Lambda$. By Proposition 4.11 in \cite A,
$r \Lambda G)$ is the Jacobson radical of $\Lambda G$. It is easy to see that $(\Lambda
G)/(r\Lambda G)\cong (\Lambda/r)G$. In Page 84 of \cite A, it was
given that $(\Lambda/r)G\cong A_1\times A_2\times A_3\times
A_4=k\times k\times\left(
\begin{array}{cl}
k & k \\
k & k
\end{array}\right)\times\left( \begin{array}{cl}
k & k \\
k & k
\end{array}\right)$ as algebras and the associated basic
algebra $B$ is obtained in the reduced form from $\Lambda G$,
which is Mortia-equivalent to $\Lambda G$, and moreover, it was
proved in \cite A that $B$ is isomorphic to the path algebra of
the following quiver

\bigskip
\begin{center}
%\begin{figure}[hbt]
\begin{picture}(50,50)(80,0)
\put(100,0){\makebox(0,0){$ \bullet$}}
\put(100,-7){\makebox(0,0){$e_{(1)}$}}\put(60,-7){\makebox(0,0){$e_{(2)}$}}
 \put(105,0){\vector(1,0){30}} \put(100,35){\vector(0,-1){30}}
\put(100,38){\makebox(0,0){$\bullet$}}\put(105,20){\makebox(0,0)
{$\nu$}}\put(100,48){\makebox(0,0){$e_{(3)}$}}
\put(65,0){\vector(1,0){30}} \put(140,0){\makebox(0,0){$\bullet$}}
\put(140,-7){\makebox(0,0){$e_{(4)}$}}
 \put(60,0){\makebox(0,0){$\bullet$}}\put(80,7){\makebox(0,0)
 {$\lambda$}}\put(120,7){\makebox(0,0){$\mu$}}
\end{picture}
%\end{figure}
\end{center}
\bigskip

This quiver is just the Ext-quiver $\Gamma_{\Lambda G}$ of $\Lambda
G$. Therefore, all $m_{ij}=0$, or $1$.  For $i=1,2,3,4,\;\dim_kA_i=n_i^2$ where $n_1=n_2=1$, $n_3=n_4=2$.
By Theorem \ref{th2.2}, $t_{ij}=\lceil\frac{m_{ij}}{n_in_j}\rceil$. Then, for
each pair $(i,j)$, we have $t_{ij}=m_{ij}=0$ or 1. Therefore, the
natural quiver $\Delta_{\Lambda G}$ is equal to the Ext-quiver
$\Gamma_{\Lambda G}$.

\section{Algebras splitting over radicals}

%In this section, we first give the notion of generalized path algebras and then give a new characterization of an
%Artinian algebra $A$ which is splitting over its radical $r$ in terms of generalized path algebra of its natural quiver.

The concept of generalized path algebra was introduced  early in
\cite{CL}. Here we review a different but equivalent definition.

Given a quiver $Q=(Q_{0}, Q_{1})$ and a collection of $ k$-algebras $ \mathcal{A}=\{ A_i \;|\; i \in Q_0\}$, let $e_i \in A_i $ be the identity.  Let $ A_0=\prod_{i\in Q_0}A_i$
be the direct product $k$-algebra. Note that
$e_i$ are orthogonal  central idempotents of $A_0$.

For $ i,j \in Q_{0}$, let $ \Omega(i,j)$ be the subset of arrows in $ Q_{1}$ from $i$ to  $j$. Define $ A_i \Omega(i,j)A_j$ to be the free $A_i$-$A_j$-bimodule (in the category of $k$-vector spaces) with
basis $\Omega(i,j)$. This is the free $ A_i\otimes_{k}
A_j^{op}$-module over the set $\Omega(i,j)$.

Then $M=\oplus_{i,j}  A_{i}\Omega(i,j)A_j$ is an $A_0$-$A_0$-bimodule.
The generalized path algebra is defined to be the tensor algebra
\begin{equation}\label{first} T_{A_0}(M)=\oplus_{n=0}^{\infty} M^{\otimes_{A_0}n}.\end{equation}
Here $M^{\otimes_{A_0}n}=M\otimes_{A_0}M\otimes_{A_0}\cdots
\otimes_{A_0}M$ and $ M^{\otimes_{A_0}0}=A_0$. We denote the
generalized path algebra by $ k(Q, \mathcal{A})$. Elements in
$M^{\otimes_{A_0}n}$ are called {\em virtual $\mathcal{A}$-paths}
of length $n$. In cases of path algebras, virtual
$\mathcal{A}$-paths are linear combinations of 
$\mathcal{A}$-paths of equal length. We denote
$J=\bigoplus^{+\infty}_{n=1}M^{\otimes_{A_{0}}n}$. Then $ k(Q, \mathcal{A})/J\cong A_{0}$.

The generalized path algebra has the following universal mapping
property.  For any $k$-algebra $B$ with any $k$-algebra homomorphism
$\phi_0: A_0 \rightarrow B$ (thus making $B$ an
$A_0$-$A_0$-bimodule) and any $A_0$-$A_0$-bimodule homomorphism $
\phi_1: M\rightarrow B$, there is a unique $ k$-algebra homomorphism
$\phi: k(Q, \mathcal{A})\rightarrow B$ extending $\phi_0$ and
$\phi_1$.

This definition is equivalent to the original definition in
\cite{CL} and has the advantage of the above mentioned universal
mapping property. As a matter of fact, the classical path algebras
are the special cases by taking $ A_i=k$. We are more interested
in the case when $A_i$ are simple $k$-algebras, in particular when
all $A_i$ are central simple algebras. The generalized path
algebra $k(Q, \mathcal{A})$ is called {\em normal} if all $A_i$
are simple $k$-algebras.

 A $k$-algebra $A$ is said to be {\em splitting over its radical} $r$
 if there is a $k$-algebra homomorphism $ \rho: A/r\rightarrow A$ such
 that $\pi\circ \rho =\operatorname{Id}_{A/r}$. For example if $A/r$ is separable and $A$ is Artinian, then $A$ is always splitting over $r$ as result of Wedderburn-Malcev theorem (and many generalizations in the literature \cite P).  Note that a normal generalized
  path algebra $k(Q, \mathcal A)$ with an acyclic (no oriented cycles) quiver $Q$ is always
  splitting over its radical $\rad(k(Q, \mathcal A))=J$. This equality fails in general for normal generalized path algebras.

\begin{proposition}
Let $Q$ be a finite  quiver and $k(Q,\mathcal A)$ be a normal
generalized path algebra with each $A_i$ being finite dimensional.
If  $I$ is an ideal of $k(Q, \mathcal{A}$ with $
J^{s}\subseteq I\subset J$ for some positive integer $s$. Then,  $k(Q,\mathcal
A)/I$ is a finite dimensional algebra and $\rad(k(Q,\mathcal A)/I)=J/I$.
\end{proposition}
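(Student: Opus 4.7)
The plan is to treat the two assertions separately. For finite-dimensionality, I would reduce to showing $k(Q,\mathcal A)/J^{s}$ is finite-dimensional via the surjection induced by $J^{s}\subseteq I$. For the identification of the radical, I would verify the two standard conditions characterising the Jacobson radical: that $J/I$ is nilpotent, and that the quotient by $J/I$ is semisimple.

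For finite-dimensionality, I would use the grading (\ref{first}) to write, as a $k$-vector space,
\[
k(Q,\mathcal A)/J^{s}\;\cong\;\bigoplus_{n=0}^{s-1}M^{\otimes_{A_{0}}n}.
\]
The base ring $A_{0}=\prod_{i\in Q_{0}}A_{i}$ is finite-dimensional because $Q_{0}$ is finite and each $A_{i}$ is so by hypothesis. The bimodule $M=\bigoplus_{i,j\in Q_{0}}A_{i}\Omega(i,j)A_{j}$ is finite-dimensional because $Q_{1}$ is finite and each summand is free of rank $|\Omega(i,j)|$ as an $A_{i}$-$A_{j}$-bimodule, hence of $k$-dimension $|\Omega(i,j)|\cdot\dim_{k}A_{i}\cdot\dim_{k}A_{j}$. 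Each tensor power $M^{\otimes_{A_{0}}n}$ is then finite-dimensional (bounded crudely by $(\dim_{k}M)^{n}$), so the displayed sum is too. Composing with the surjection $k(Q,\mathcal A)/J^{s}\twoheadrightarrow k(Q,\mathcal A)/I$ yields the first claim.

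For the radical, since multiplication in the tensor algebra is concatenation of tensors, $J^{s}=\bigoplus_{n\geq s}M^{\otimes_{A_{0}}n}$, and the hypothesis $J^{s}\subseteq I$ gives $(J/I)^{s}=0$. Thus $J/I$ is a nilpotent two-sided ideal of the finite-dimensional algebra $k(Q,\mathcal A)/I$, and is therefore contained in its Jacobson radical. For the reverse inclusion, the third isomorphism theorem gives
\[
(k(Q,\mathcal A)/I)\big/(J/I)\;\cong\;k(Q,\mathcal A)/J\;\cong\;A_{0}\;=\;\prod_{i\in Q_{0}}A_{i}.
\]
Because $k(Q,\mathcal A)$ is normal, each $A_{i}$ is a simple $k$-algebra, so $A_{0}$ is a finite direct product of simple algebras, hence semisimple. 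Any two-sided ideal whose quotient is semisimple must contain the Jacobson radical, so $\rad(k(Q,\mathcal A)/I)\subseteq J/I$. Combining the two inclusions gives equality.

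There is no real obstacle here; the argument is essentially bookkeeping on the grading. The two points that deserve attention are (a) normality of $k(Q,\mathcal A)$, which is precisely what makes the degree-zero piece $A_{0}$ semisimple and thereby drives the radical identification, and (b) finite-dimensionality of each $A_{i}$, without which even $k(Q,\mathcal A)/J^{s}$ would fail to be finite-dimensional, since its degree-zero part is already $A_{0}$ itself. The strict inclusion $I\subset J$ in the hypothesis plays no role in the argument beyond ensuring that $J/I$ is interpreted as an ideal of the quotient in the usual way.
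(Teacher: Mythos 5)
Your argument is correct and follows the same route as the paper: finite-dimensionality of the graded pieces $M^{\otimes_{A_0}n}$ yields finite-dimensionality of $k(Q,\mathcal A)/J^s$ and hence of the quotient by $I$, while nilpotency of $J/I$ together with semisimplicity of $(k(Q,\mathcal A)/I)/(J/I)\cong A_0$ identifies the radical. You have merely spelled out the dimension estimates and the two-sided inclusions that the paper leaves implicit.
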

\begin{proof} Since $Q$ is finite  quiver and $ A_{i}$ are finite dimensional, we have $ M^{\otimes _{A_{0}}n}$ in (\ref{first}) is finite dimensional over $k$ for all $n$.   Hence $k(Q, \mathcal{A})/J^{s}$ is also finite dimensional.
  It follows that $k(Q,\mathcal A)/I$ is
finite dimensional. On the other hand, $J/I$ is a nilpotent ideal of $k(Q, \mathcal
A)/I$ with $(J/I)^s=0$ and $(k(Q,\mathcal A)/I)/(J/I)\cong
k(Q,\mathcal A)/J\cong\bigoplus_{i\in Q_0}A_i $ is a semisimple
algebra. This implies $\rad(k(Q,\mathcal A)/I)=J/I$.
\end{proof}

We remark that the finite dimensionality of $A_i$ over $k$ cannot be
removed. For example, take the Dynkin quiver $Q$ of type $A_2$
with two vertices $\{ 1,2\}$ and one arrow $ \alpha$  from 1 to 2.
Let $A_1 $ and $ A_2$ be two infinite dimensional field extensions
of $k$. Although $A_1$ and $ A_2$ are two simple $k$-algebras,
but the path algebra $k(Q, \mathcal{A})$ is not left or right Artinian
since
$$ A_2\alpha V=\{\sum_ia_i\alpha b_i: a_i\in A_2, b_i\in V\}$$
 is a left ideal of $k(Q, \mathcal{A})$ for any vector subspace $V$ of
$A_1$.

In this section we will show that every Artinian algebra $A$ which
is splitting  over its radical will be a
 quotient of a generalized path algebra of its natural quiver.
% This result will eliminate the needs of
% pseudo-path algebras in \cite{Li}.

For an Artinian algebra $A$ with radical $r$, let $\Delta_A$ be its
natural quiver and $A/r=\bigoplus_{i\in(\Delta_A)_0}A_i$ where all
$A_i$ are simple algebras. Denote $\mathcal A=\{A_i:\;
i\in\Delta_0\}$. Then, the generalized path algebra
$k(\Delta_A,\mathcal{A})$ is called {\em the associated generalized
path algebra} of $A$.

In \cite{LC1}, we introduce the following notion of Gabriel-type
algebra.
\begin{definition}\label{3.2}
Let $A$ be an Artinian $k$-algebra  and
$k(\Delta_A,\mathcal A)$ be its associated  generalized path
algebra. $A$ is said to be of
{\em Gabriel-type for generalized path algebra} if $A\cong k(\Delta_A,\mathcal A)/I$ for some ideal $I$ of $k(\Delta_A,\mathcal A)$ contained in $J$. We call $I$ the defining ideal of $A$.
\end{definition}

In \cite{Li} and \cite{LC1}, the conditions for certain artinian algebras to be Gabrial type were discussed under different assumptions. Here we give a more general description.
%Since in \cite{Li}, we have the Generalized Gabriel's Theorem for an
%Artinian  algebra $A$ with 2-nilpotent radical $r=r(A)$ in the case
%$A$ is splitting over $r$, that is,
%  $A\cong k(\Delta_{A},\mathcal{A})/\langle\rho\rangle$ with
%$ J^{2}\subset\langle\rho\rangle\subset J$ where $ J$ denotes the
%ideal generated by  all
% $\mathcal{A}$-paths of length $1$ in $k(\delta_A, \mathcal{A})$ and
%$\langle\rho\rangle$ is an ideal generated by the set of relations
%$\rho$ of $k(\Delta_A,\mathcal{A})$. It means that any such
%Artinian algebra is always of Gabriel-Type for generalized path
%algebra.

% Another example of
%Gabriel-type algebra for generalized path algebra is radical-graded
%Artinian algebra in \cite{LC1}.
%
%The third example has been proven by Theorem 3.2 in \cite{Li}
%which said that the only condition for an Artinian algebra to be of
%Gabriel-type for pseudo path algebra is splitting over its radical.

%From three examples of Gabriel-type algebras given above, we can
%find all  algebras $A$ which are splitting over their radicals
%respectively. In fact, it means the necessary conditions as follows:

\begin{theorem}\label{thm3.3}
Let $A$ be a Gabriel-type Artinian algebra for generalized path
algebra over a field $k$ such that $A\cong k(\Delta_A,\mathcal A)/I$
with the ideal $I$ of $k(\Delta_A,\mathcal A)$ satisfying
$I\subset J$. Then, $A$ is
splitting over its radical $r$.
\end{theorem}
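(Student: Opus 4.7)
The plan is to construct a splitting $\rho\colon A/r\to A$ by pulling back, through the given isomorphism $\phi\colon A\xrightarrow{\sim}B:=k(\Delta_A,\mathcal{A})/I$, the natural embedding of the degree-zero part $A_0=\bigoplus_{i=1}^{s}A_i$ into $B$. Since $A_0\cap J=0$ inside $k(\Delta_A,\mathcal{A})$ and $I\subseteq J$, the composition
$$\rho_0\colon A_0\;\hookrightarrow\;k(\Delta_A,\mathcal{A})\;\twoheadrightarrow\;B\;\xrightarrow{\phi^{-1}}\;A$$
is an injective $k$-algebra homomorphism. It then suffices to show that $\pi\circ\rho_0\colon A_0\to A/r$ is an isomorphism, for in that case $\rho:=\rho_0\circ(\pi\circ\rho_0)^{-1}$ is the required section of $\pi$.

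The crux is to establish $\phi(r)=J/I$. The inclusion $\phi(r)\subseteq J/I$ is routine: $B/(J/I)\cong A_0$ is semisimple, so $\rad(B)=\phi(r)$ must lie in $J/I$. The reverse inclusion is the main obstacle. Proposition~3.1 would deliver it immediately under the hypothesis $J^s\subseteq I$ for some $s$, but here we are only given $I\subseteq J$, so the nilpotency of $J/I$ is not available a priori and must somehow be deduced from the Artinian hypothesis on $A$.

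The workaround I would use is to examine the ring surjection induced by the inclusion $r\subseteq\phi^{-1}(J/I)$, namely
$$\bar{\pi}\colon A/r\;\twoheadrightarrow\;A/\phi^{-1}(J/I)\;\cong\;A_0.$$
By the construction of the natural quiver and the choice $\mathcal{A}=\{A_i\}$, both source and target coincide as abstract $k$-algebras with the semisimple Artinian ring $\bigoplus_{i=1}^{s}A_i$; writing $A_i\cong M_{n_i}(D_i)$, each has finite composition length $\sum_{i}n_i$ as a left module over itself. A surjective $A/r$-module map between modules of equal finite length must be an isomorphism, so $\bar{\pi}$ is an isomorphism and $\phi^{-1}(J/I)=r$. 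With this identification, the projection $\pi\colon A\to A/r$ transfers via $\phi$ to the degree-zero projection $B\twoheadrightarrow B/(J/I)\cong A_0$, and composing with the degree-zero inclusion $A_0\hookrightarrow B$ gives the identity on $A_0$; hence $\pi\circ\rho_0$ is an isomorphism and the desired splitting $\rho$ exists.
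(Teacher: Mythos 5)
Your proof is correct and follows essentially the same route as the paper's: pull back the degree-zero copy $A_0=\bigoplus_i A_i$ into $A$ via the quotient $k(\Delta_A,\mathcal{A})\twoheadrightarrow k(\Delta_A,\mathcal{A})/I\cong A$, using $A_0\cap J=0$ and $I\subseteq J$ to get injectivity, then check this splits the radical projection. The one place you go beyond the paper is in justifying $\rad(k(\Delta_A,\mathcal{A})/I)=J/I$, which the paper asserts without comment; your length comparison of the two copies of $\bigoplus_i A_i$ (equivalently, a Krull--Schmidt argument on the semisimple quotients) fills in that step cleanly.
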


\begin{proof}  The assumption $ I \subseteq J$ implies
$$(k(\Delta_A,\mathcal{A})/I)/(J/I)\cong
k(\Delta_A,\mathcal{A})/J\cong A_{1}\oplus \cdots \oplus A_{p}\cong
A/r$$ is semisimple, then $\rad A\cong
\rad (k(\Delta_A,\mathcal{A})/I)=J/I$. We have $A_{1}\oplus \cdots
\oplus A_{p}\subset k(\Delta_A,\mathcal{A})$ and $(A_{1}\oplus
\cdots \oplus A_{p})\cap I\subset (A_{1}\oplus \cdots \oplus
A_{p})\cap J=0$. Hence, we get $A_{1}\oplus \cdots \oplus
A_{p}\stackrel{\varepsilon}{\hookrightarrow}
k(\Delta_A,\mathcal{A})/I \stackrel{\eta}{\twoheadrightarrow}
(k(\Delta_A,\mathcal{A})/I)/(J/I) \cong
k(\Delta_A,\mathcal{A})/J\cong A_{1}\oplus \cdots \oplus A_{p}$
which implies $\eta\varepsilon=1$. Therefore, $A$ is splitting over
$r$.
\end{proof}

%From this proposition,  we get the converse of Theorem 4.2 in
%\cite{Li} and then the sufficient and necessary condition follows for an
%Artinian algebra to be splitting over its $2$-nilpotent radical
% by using of
%generalized path algebra.

In fact, in Theorem 8.5.4 of \cite{DK}, it was proven that, for a
finite dimensional algebra $A$ with radical $r$, if the quotient
algebra $A/r$ is separable, then $A$ is isomorphic to a quotient
algebra of the tensor algebra $T_{A/r}(r/r^2)$ by an ideal $I$ such
that $J^s\subset I\subset J^2$ for some positive integer $s$.
 The
separability condition plays to two roles in the proof. First, it
guarantees the Wedderburn-Malcev theorem to get
\begin{itemize}
\item[(a)]
$ A$
 is splitting over its radical.
 \end{itemize}
Secondly, the seperability also implies that
\begin{itemize}
\item[(b)] $ r/r^2 $  isomorphic to a
direct summand of $r$ as an $A_0$-$A_0$-bimodule.
\end{itemize}
It turns out
 the properties (a) and (b) together is equivalent to the existence
of the surjective map $\phi$ in the theorem.

It is proved in \cite{Li} that there always exists a surjective homomorphism of
algebras $\pi: k(\Delta_A,\mathcal A)\rightarrow T_{A/r}(r/r^2)$. This can be seen from the universal property of $k(\Delta_{A}, \mathcal{A})$.  Hence any artinian algebra $A$ with separable quotient $A/r$ is
isomorphic to a quotient algebra of $k(\Delta_A,\mathcal A)$ by an
ideal $I$ as in Theorem \ref{thm3.3}.   The different point
in this paper and that in  \cite{Li}  the algebra $A$ is
splitting over its radical and the given ideal $I$  is included
in $J$ but not in $J^2$. The following theorem is an improvement of the result in \cite{Li}.
%, the following shows that any Artinian $k$-algebra splitting over its radical
%is of Gabriel type.

\begin{theorem} \label{th3.3} Let $A$ be an Artinian $k$-algebra such
that $A$ is splitting over its radical.
Then there is a surjective algebra homomorphism $ \phi:
k(\Delta_{A}, \mathcal{A})\rightarrow A$ with $ J^s\subseteq
\ker(\phi)\subseteq J$ for some positive integer $s$.
\end{theorem}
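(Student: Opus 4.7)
The plan is to build $\phi$ using the universal property of the generalized path algebra recalled in Section~3, starting from the Wedderburn-type splitting provided by the hypothesis. Since $A$ is splitting over $r$, fix an algebra embedding $\rho \colon A/r \hookrightarrow A$ with $\pi \circ \rho = \operatorname{Id}$; set $A_0 := \rho(A/r) = \bigoplus_i \rho(A_i) \subseteq A$, so that $A = A_0 \oplus r$ as $A_0$-$A_0$-bimodules. By the definition of $\Delta_A$, the set of arrows from $i$ to $j$ indexes a minimal bimodule generating set of the isotypic component ${}_jM_i = A_j(r/r^2)A_i$ of $r/r^2$; choose for each such arrow $\alpha$ a preimage $m_\alpha \in r$ of the corresponding generator. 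Then $\alpha \mapsto m_\alpha$ extends to an $A_0$-$A_0$-bimodule map $\phi_1 \colon M \to r$ out of the free bimodule $M = \bigoplus_{i,j} A_i \Omega(i,j) A_j$, and by the universal property the pair $(\rho, \phi_1)$ lifts uniquely to an algebra homomorphism $\phi \colon k(\Delta_A, \mathcal A) \to A$ with $\phi|_{A_0} = \rho$ and $\phi(M) \subseteq r$.

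The two kernel inclusions are both short. Since $\phi(M) \subseteq r$, multiplicativity of $\phi$ forces $\phi(M^{\otimes_{A_0} n}) \subseteq r^n$, so $\phi(J^s) \subseteq r^s$; the Artinian hypothesis makes $r$ nilpotent, yielding $J^s \subseteq \ker(\phi)$ for all sufficiently large $s$. Conversely, any $x \in k(\Delta_A, \mathcal A)$ decomposes uniquely as $x = x_0 + x_J$ with $x_0 \in A_0$ and $x_J \in J$; if $\phi(x) = 0$ then $\rho(x_0) = -\phi(x_J)$ lies in $A_0 \cap r = 0$, and injectivity of $\rho$ forces $x_0 = 0$, giving $\ker(\phi) \subseteq J$.

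The only step with real content is the surjectivity of $\phi$, which I would handle by a Nakayama-style iteration along the filtration $r \supseteq r^2 \supseteq \cdots \supseteq r^s = 0$. Let $B = \operatorname{Im}(\phi)$ and let $V \subseteq B \cap r$ be the $A_0$-sub-bimodule generated by the chosen lifts $\{m_\alpha\}$; by construction $V$ surjects onto $r/r^2$, i.e.\ $V + r^2 = r$. An easy induction using
$$ r^{n+1} = r \cdot r^n = (V + r^2)(V^n + r^{n+1}) \subseteq V^{n+1} + r^{n+2} $$
yields $r^n = V^n + r^{n+1}$ for every $n \geq 1$, and unwinding down to $r^s = 0$ gives $r = V + V^2 + \cdots + V^{s-1} \subseteq B$; combined with $A_0 \subseteq B$ this forces $A = A_0 + r \subseteq B$. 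The main obstacle to watch is the bookkeeping---specifically that the left-right conventions for arrows of $\Delta_A$ are matched with those of the path algebra $k(\Delta_A, \mathcal A)$ so that the bimodule $V$ actually surjects onto the correct isotypic pieces of $r/r^2$---but no substantial new idea is required beyond the universal property and the nilpotency of the radical.
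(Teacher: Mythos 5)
Your proof is correct and follows essentially the same strategy as the paper: use the splitting to view $A/r$ as a subalgebra $A_0\subseteq A$, lift a minimal bimodule generating set of $r/r^2$ into $r$, invoke the universal property of $k(\Delta_A,\mathcal A)$, and establish surjectivity by a descent along the radical filtration, with the kernel bounds coming from $\phi(J)\subseteq r$ and nilpotency of $r$. The only cosmetic difference is that the paper packages the descent via the associated graded algebra $\gr(A)=\bigoplus_{n} r^n/r^{n+1}$ together with a maximality argument on the largest $p$ with $r^p\not\subseteq S$, whereas your identity $r^n=V^n+r^{n+1}$ and direct unwinding accomplish the same thing more compactly.
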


\begin{proof}
 Let $A/r=A_0=\oplus_{i\in \Delta_0}A_{i}$.  Then
 $A_0$ is semisimple and $r/r^2$ is a semisimple
 $A_0$-module. Since $A$ splits over $r$, we can
 regard $A_0$ as a subalgebra of $A$ and hence $r$ is
 an $A_0$-$A_0$-bimodule under the multiplication in
 $A$ and the quotient map $\bar{}: r\rightarrow r/r^2$
is a homomorphism of $A_0$-$A_0$-bimodules. We will used this
property in the following argument.

 For each pair $(i,j)$, let
  $_iM_j=A_i(r/r^2)A_j $. Let
  $X_{ij}=\{ f^{ij}_{l} \in A_irA_j
  \subseteq r\;|\; l=1,\dots, t_{ij}\}$ such that
  the image
  $\bar{X}_{ij}=\{\bar f_{l}^{ij}\;|\; l=1, \dots,t_{ij}\}$ in
  ${}_iM_j$
  is a minimal generating set as
  an $A_i$-$A_j$-bimodule. Let $X=\cup_{i,j\in\Delta_0}X_{ij}$.
 Then, its image $\bar{X}=\cup_{i, j \in \Delta_0} \bar{X}_{ij}$
 generates $r/r^2=\sum_{i, j \in \Delta_0}A_i(r/r^2)A_j$ as an
 $A_0$-$A_0$-bimodule.

 We now show that the subalgebra $S$ of $A$ generated by
 $A_0$ and the set $ X$
is actually $A$. Consider the associated graded algebra $
\gr(A)=\oplus_{s\geq 0} r^s/r^{s+1}$, with $r^0=A$. Then
$\gr(S)=\oplus_{s\geq 0}( S\cap r^s)/(S\cap
 r^{s+1})$ is the graded  subalgebra of $\gr(A)$
 generated by $A_0$ and
 $\bar{X}=\{ \bar{f}^{ij}_{l}\;|\; 1\leq l \leq
 t_{ij}, i, j \in \Delta_0\}$.

We claim that $ \gr(S)=\gr(A)$.  We will show that
$S_p=r^p/r^{p+1}$, where $S_p=(S\cap r^{p})/(S\cap r^{p+1})$ are the
homogeneous components of the graded algebras  $\gr{S}$. Note that
both $ S_p$ and $r^p/r^{p+1}$ are $A_0$-$A_0$-bimodules. It follows
from the definition that $S_1=(A_{0}X A_0)/(r^2\cap S)\subseteq A_0
\bar{X}A_0=r/r^2$.
 By the choice of the set $\bar X$, we have $A_0XA_0$ maps
 onto $ r/r^2$ (under the quotient map $ r\rightarrow r/r^2$).
  Hence, we have $S_1=r/r^2$.
The multiplication
\[\underbrace{r/r^2\otimes_{A_0}r/r^2\otimes_{A_0}\cdots
\otimes_{A_0}r/r^2}_{\text{$p$ times}}\rightarrow r^p/r^{p+1}
\]
in $\gr(A)$ is surjective following the definition of $r^p$. Thus \[
 r^p/r^{p+1} = \underbrace{S_1\cdots S_1}_{\text{$p$ times}}\subseteq
 S_p.\]
But we have $ S_p\subseteq r^p/r^{p+1}$. Hence $ S_p=r^p/r^{p+1}$
and $\gr(A)=\gr(S)$.

We now prove that $S=A$. Otherwise, we must have $r\nsubseteq S$
since $A$ is generated by $A_0$ and $r$. Let $p$ be maximal
 such that $ r^p\subseteq S$ is not true. Such $p$ exists since
$r^m=0$ for $m>>0$ ($A$ is Artinian) and $p\geq 1$. Take $ a \in
r^p \setminus S$ and  let $\bar{a}\in r^p/r^{p+1}$ be the image of
$a$. Since $S_p=r^p/r^{p+1}$, there is $b \in S\cap r^p$ such that
$ \bar{b}=\bar{a}$ in $r^p/r^{p+1}$. Thus $ a-b\in
r^{p+1}\subseteq S$ ($p$ is maximal). Hence, $a=b+(a-b)\in S\cap
r^p$. This is a contradiction. Hence we have $ S=A$.

Since $A$ splits over $r$ and $A_0$ is a subalgebra of $A$ such that
the quotient map $A\rightarrow A/r$ restricts to $A_0$ is the
identity map.  Then $A_0XA_0\subseteq A$ is an
$A_0$-$A_0$-sub-bimodule of $A$. Since
$|\Omega(i,j)|=t_{ij}=rk({}_iM_j)$, there is a surjective
homomorphism of  $A_0$-$A_0$-bimodules: $
A_0\Omega(i,j)A_0\rightarrow A_0 X_{ij}A_0$. Now by the universal
mapping property of the generalized path algebra, there is a
surjective algebra homomorphism $\phi: k(\Delta_A,
\mathcal{A})\rightarrow S$ since  $S$ is  generated by $A_0$ and the
set $X$ as a subalgebra of $A$. Hence
  $\phi(k(\Delta, \mathcal{A}))=S=A$ as we have just proved and
  $\phi\mid_{A_0}=\operatorname{id}_{A_0}$.

Let $I=\ker(\phi)$ and $J$ be the ideal of $k(\Delta_A,
\mathcal{A})$ generated by all virtual $\mathcal A$-paths of length
$1$. Then $\phi(J)= r$ and thus induces $k(\Delta, \mathcal{A})/J
\cong A/r$. Hence $ \ker(\phi)\subseteq J$. Since $\phi(J^p)=r^p$
for all $ p\geq 0$. We have $ \phi(J^s)=r^s=0$ for some $s>0$.
This completes the proof of the theorem.
\end{proof}

 Theorem 3.4
requires that the Artinian algebra $ A$ is splitting over its radical, i.e.,
the property (a) only without (b) as mentioned above, although the tenser algebra has to be
replaced by the generalized path algebra. In case $r_A^2=0$, then
the condition (b) is automatic provided (a) holds.

 For an Artinian algebra $A$, setting $A_0=A/r_A$ which is a semi-simple algebra
 with an $A_0$-$A_0$-bimodule structure on $M=r_A/r^2_A$. Then we
 can call the ordered pair $(A_0, r_A/r^2_A)$ a {\em generalized $k$-species}
 in the language of $k$-species discussed in \cite{R}. The tensor algebra
 $T_{A_0}(M)=\oplus_{n=0}^{\infty}M^{\otimes_{A_0}n} $ is an associate algebra
 (not necessarily  Artinian). The generalized path algebra of a quiver is
 naturally the tensor algebra of a generalized $k$-species. The properties of this path algebra is controlled by the bi-module structure of $M$. This algebra plays an important role in non-commutative geometry, which will be studied in the next paper.

 The following
example shows the difference between the generalized path algebra
and the tensor algebra of the associated generalized $k$-species in the fact
that $\phi$ does not exist with condition (a) only.

One also notes that the surjective algebra homomorphism $\pi:
k(\Delta_A, \mathcal{A})\rightarrow T_{A_0}(r_A/r^2_A)$
is an isomorphism if and only is $A_i (r_A/r^2_A)A_j$ is a free
$A_i\otimes_{k}A_j$ -bimodule for all $ i , j$.  One natural question
is, for a general $\pi$,  whether there is a map $ \psi: T_{A_0}(r_A/r^2_A)\rightarrow A$ such that $ \psi\circ \pi =\phi$. The following example gives an answer to
this question. It also shows that one cannot expect to generalize
\cite[Th. 8.5.2]{DK} to non-separable cases.

\begin{example} Let $ D=k[\alpha]$ with $k=\mathbb{F}_p(t)$ being the
transcendental extension of the finite field with $p$ elements and
$\alpha= \sqrt[p]{t}$. Then $D$ is a purely inseparable field
extension of  $k$. Now $ B=D\otimes_{k}D$ is not  semisimple
and its radical $r_B$ is nilpotent and generated by $
x=\alpha\otimes_{k}1-1\otimes_{k}\alpha$ as a
$D$-$D$-subbimodule of $B$. Let $ A=D\oplus r_B$ as a
$k$-vector space with multiplication defined by $(d, z)(d', z)=(dd',
dz'+zd'+zz')$ with $ d,\, d' \in D$ and $z,\, z' \in r_B$ . Here $dd'$
and $ zz'$ respectively the multiplications in $D$ and $ r_B$
respectively by noting the $D$-$D$-bimodule structure on $r_B$. This makes $A$ an associative algebra and
 $A/r_A=D$ is not separable.  But $ A$ is splitting over its
radical. $A$ is a not a commutative algebra (unless $p=2$) and $
r_A/r^2_A=r_B/r^2_B\cong D$ as $D$-$D$-bimodules with the
left and right actions of $D$ coincide. Note the left and right actions of
$D$ on $r_B$ are not the same unless $p=2$.  Hence the tensor
algebra $T_D(r_A/r^2_A)$ is a commutative algebra. But $A$ is not
commutative. Hence there is no surjective $k$-algebra map $\psi:
T_{D}(r_A/r^2_A)\rightarrow A$.
\end{example}

This example shows that under the assumption that $A$ is splitting
over its radical, the surjective map $\phi: k(\Delta_A,
\mathcal{A})\rightarrow A$ in Theorem 3.4 cannot be factored
through $ T_{A_0}(r_A/r^2_A)$.

As mentioned earlier that the separability condition on $A_0$ implies
that $ A$ is splitting over its radical, by Wedderburn-Malcev
theorem. There has been numerous generalizations of
Wedderburn-Malcev theorem in the literature. Combining Theorem
\ref{thm3.3} and Theorem 3.4, the following gives a characterization
of Artinian algebras that is splitting over its radical in terms of natural
quivers and  the associated generalized path algebras.

\begin{corollary}\label{cor3.4}
An Artinian $k$-algebra $A$  is splitting over its radical if and only if
$A$ is of Gabriel-Type for generalized path algebras (cf. Def. 3.2), i.e.,
there is a surjective algebra homomorphism $ \pi: k(\Delta_{A}, \mathcal{A})\rightarrow A$ such that  $\ker(\pi)\subseteq J$.
%there is  a set of relations $\rho$ of $k(\Delta,\mathcal{A})$ and a
%positive integer $s$ such that $A\cong
%k(\Delta_{A},\mathcal{A})/\langle\rho\rangle$ with $
%J^{s}\subset\langle\rho\rangle\subset J$
% where $\langle\rho\rangle$ is
%the ideal generated by $\rho$.
\end{corollary}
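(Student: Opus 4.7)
The plan is very short: this corollary is simply the combination of the two theorems already established in Section 3, packaged as a biconditional characterization.

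For the "only if" direction (splitting over its radical implies Gabriel-type), I would invoke Theorem \ref{th3.3} directly. That theorem produces a surjective algebra homomorphism $\phi: k(\Delta_A, \mathcal{A}) \rightarrow A$ satisfying the stronger condition $J^s \subseteq \ker(\phi) \subseteq J$ for some positive integer $s$. In particular $\ker(\phi) \subseteq J$, so the defining property of Gabriel-type for generalized path algebras (Definition \ref{3.2}) holds with $I = \ker(\phi)$.

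For the converse direction (Gabriel-type implies splitting over its radical), I would apply Theorem \ref{thm3.3}. Suppose $A \cong k(\Delta_A, \mathcal{A})/I$ with $I \subseteq J$. Theorem \ref{thm3.3} constructs an explicit splitting by embedding $A_0 = A_1 \oplus \cdots \oplus A_p$ as a subalgebra of $k(\Delta_A, \mathcal{A})/I$ via the inclusion $\varepsilon$, and showing that the composition with the projection $\eta$ onto $(k(\Delta_A,\mathcal{A})/I)/(J/I) \cong A/r$ is the identity. This exhibits a section of $A \twoheadrightarrow A/r$, which is exactly the definition of $A$ being splitting over its radical.

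Since both implications are given by results earlier in this section, there is essentially no new content to prove — the corollary just packages Theorem \ref{thm3.3} and Theorem \ref{th3.3} into a single biconditional, with the observation that the condition $J^s \subseteq \ker(\phi)$ appearing in Theorem \ref{th3.3} is automatic from $\ker(\phi) \subseteq J$ together with the Artinian hypothesis on $A$ (since then $r^s = 0$ forces $\phi(J^s) = 0$). There is no real obstacle; I would write the proof as two one-line references to the two theorems.
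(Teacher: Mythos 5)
Your proof is correct and follows exactly the paper's intended argument: the text immediately preceding the corollary states that it is obtained by "combining Theorem~\ref{thm3.3} and Theorem 3.4," which is precisely what you do. Your side remark that $J^{s}\subseteq\ker(\pi)$ is automatic (since $\ker(\pi)\subseteq J$ forces $\pi(J)=r$, and $r$ is nilpotent in the Artinian algebra $A$) is also correct and explains why the corollary's hypothesis is formally weaker than the conclusion of Theorem~\ref{th3.3} yet equivalent to it.
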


%Similarly with Definition \ref{3.2}, we can define the notion of a
%Gabriel-type Artinian algebra for pseudo-path algebra, and then
%obtain the similar result as Theorem \ref{thm3.3} replacing
%generalized path algebra by pseudo-path algebra. As we have known
%in \cite{Li}, the analogue of Theorem \ref{th3.3} is true for
%pseudo-path algebras. Hence, the analogue of Corollary \ref{cor3.4}
%holds for pseudo-path algebras.

We will see in Section 6  it will be important if the condition $
 \ker(\pi)\subset J$  in Corollary \ref{cor3.4} is
replaced by the stronger one, that is, $ \ker(\pi)\subset J^2$. In this case, the ideal
$I=\ker(\pi)$ is said to be {\em admissible}.

\section {The relations among quivers arisen from an Artinian algebra
to its generalized path algebra}
 In this section, we use the relation between
 the Ext-quiver and the natural quiver in Section 2 to study the associated normal
 generalized path
 algebras of Artinian algebras. Note that the definition of the Ext-quiver
   always requires that an Artinian algebra $A$ is splitting over the
  field  $k$, which we will assume in this section.

  The natural quiver $\Delta_A$ of an Artinian algebra $A$ is
always finite, i.e. including finitely many vertices and finitely many arrows. In the sequel, we assume always that
$\Delta_A$ is acyclic (i.e., $\Delta_{A}$ does not have oriented cycles of length at least 1). Trivially, this is the sufficient and necessary
condition under which
 the associated  $k(\Delta_A,\mathcal A)$ is Artinian.

By definition, acyclicity of $ \Delta_{A}$ implies that the
natural quiver of $k(\Delta_A,\mathcal A)$ is just that of $A$, that is,
$\Delta_{k(\Delta_A,\mathcal A)}=\Delta_A$.  Denote by $m_{ij}$ and $g_{ij}$ the arrow multiplicities
respectively in the Ext-quivers $\Gamma_{A}$ and $\Gamma_{k(\Delta_A,\mathcal
A)}$ respectively. Note that they
should not be confused with each other. In general,
$ m_{ij}$ and $g_{ij}$ are quite different since the representation theories of $
A$ and $k(\Delta_A,\mathcal
A)$ are quite different (cf. Section 5).

For the radical $J$ of $k(\Delta_A,\mathcal A)$,
$k(\Delta_A,\mathcal A)/J\cong A/r_A=\bigoplus_{i\in\Delta_0}
A_i$. By Theorem 2.2, the natural quiver  $\Delta_{k(\Delta_A,\mathcal A)}$
is a sub-quiver of the Ext-quiver $\Gamma_{k(\Delta_A,\mathcal A)}$  and exactly,
 \begin{equation}
 t_{ij}=\lceil \frac{g_{ij}}{n_in_j}\rceil.
 \end{equation}
%where $g_{ij}$ is the number of arrows from $i$ to $j$ in
% $\Gamma_{k(\Delta_A,\mathcal A)}$,  $t_{ij}$ is the number of arrows
% from $i$ to $j$ in
% $\Delta_{k(\Delta_A,\mathcal A)}$ and
% $n_i=\surd\overline{\dim_k  A_i}$ for any $i,j\in
% (\Gamma_A)_0=(\Delta_A)_0$.

% It is better than in the case for $m_{ij}$ that  we can give a precise characterization to the relation
%between $t_{ij}$ and $g_{ij}$ as follows.

Denote $I=(\Delta_A)_0$. A {\em complete set of non-isomorphic primitive orthogonal
idempotents} of $A$ is a set of primitive orthogonal idempotents
$\{e_i:i\in I\}$ such that $Ae_{i}\not\cong
Ae_{j}$ as left $A$-modules for any $i\not=j$ in $I$ and for each
primitive idempotent $e$ the module $Ae$ is isomorphic to one
of the modules $Ae_i$ ($i\in I$).

Let
$\overline e_i=e_i+r_A$.  Then, by \cite{LC1}, $\{\overline e_i: i\in I\}$
is a complete set of non-isomorphic primitive orthogonal idempotents
of $A/r$. Let $ \tilde{e}_{i}\in k(\Delta_{A}, \mathcal{A}) $ be the lift of $ \bar{e}_{i}$ (we have assumed that $ k(\Delta_{A}, \mathcal{A})$ is artinian).
%As before, let $J$ be the ideal  of $k(\Delta_A,\mathcal A)$ such that
% $ k(\Delta_{A}, \mathcal{A})/J\cong \prod_{i}A_{i}$.
 Then, $S_i\cong Ae_i/r_Ae_i=(A/r_A)\overline e_i\cong (k(\Delta_A,\mathcal A)\tilde
{e}_i)/(J\tilde{e}_i)$, as $ i \in I$,  give a list of all non-isomorphic irreducible modules for both $A$ and  $k(\Delta_A,\mathcal A)$.

For $i\in I$, the identity $1_{A_{i}}$ of $A_{i}$ can be
decomposed into a sum of primitive idempotents, i.e.
$1_{A_{i}}=e_{11}^{i}+\cdots +e_{n_{i}n_{i}}^{i}$, and we can assume  $e_{11}^{i}=e_i$.

Note that $A$ is $k$-splitting. By (\cite{A}, Proposition III.1.14),
we have
$$g_{ij}=\dim_{k}\Ext^{1}_{k(\Delta_A,\mathcal A)}(S_{i},S_{j})=
\dim_{k}(\tilde{e}_{kk}^{j}J/J^{2}\tilde{e}_{ll}^{i})$$ for all $ k, l$.
Therefore,
\begin{eqnarray*}
\dim_{k}A_{j}J/J^{2}A_{i}
&=&\dim_{k}1_{A_{j}}J/J^{2}1_{A_{i}}
=\sum_{k=1}^{n_j}\sum_{l=1}^{n_i}\dim_{k}e_{kk}^{j}J/J^{2}e_{ll}^i\\
&=&n_{i}n_{j}\dim_{k}\Ext_{k(\Delta_{A},\mathcal{A})}^{1}(S_{i},S_{j})=n_{i}n_{j}g_{ij}.
\end{eqnarray*}
Recall that the number of arrows from $i$ to $j$ in
 $\Delta_{k(\Delta_A,\mathcal A)}=\Delta_A$ is $t_{ij}$. These
 $t_{ij}$ arrows generate freely the $A_j$-$A_i$-bimodule
 $A_{j}J/J^{2}A_{i}$. Thus,
 $$\dim_{k}A_{j}J/J^{2}A_{i}=\dim_{k}A_{j}
 \dim_{k}(J/J^{2})\dim_{k}A_{i}=n^2_jt_{ij}n^2_i.$$
 Hence, $g_{ij}=n_in_jt_{ij}$, that is, we obtain the following:
\begin{proposition}
 Let $A$ be a $k$-splitting finite dimensional algebra with radical $r$, whose natural
 quiver $\Delta_A$ is acyclic. Write $A/r
=\oplus_{i\in(\Delta_A)_0}  A_i$ where $A_i$ are simple
algebra for all $i$ with $n_i=\surd\overline{\dim_k  A_i}$.
   Then, the natural quiver of $k(\Delta_A,\mathcal A)$ is the
   same with that of $A$ and
   $$g_{ij}=n_in_jt_{ij}$$ where
$g_{ij}$ is the number of arrows from $i$ to $j$ in
 $\Gamma_{k(\Delta_A,\mathcal A)}$,  $t_{ij}$ is the number of
 arrows from $i$ to $j$ in
 $\Delta_A$.
\end{proposition}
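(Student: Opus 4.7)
The first assertion---that $\Delta_{k(\Delta_A,\mathcal A)}=\Delta_A$---is already recorded immediately before the statement: acyclicity of $\Delta_A$ makes $k(\Delta_A,\mathcal A)$ Artinian with radical $J$ and $k(\Delta_A,\mathcal A)/J\cong A/r_A=\bigoplus_i A_i$, so the two natural quivers share both vertex set and the same block data. The substance of the proposition is the identity $g_{ij}=n_in_jt_{ij}$, and the strategy is to compute $\dim_k A_j\,(J/J^2)\,A_i$ in two independent ways and equate.

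\textbf{Ext-theoretic count.} Inside each block $A_i\cong M_{n_i}(k)$ choose matrix units and write $1_{A_i}=e_{11}^{i}+\cdots+e_{n_in_i}^{i}$; by standard idempotent lifting (available since $k(\Delta_A,\mathcal A)$ is Artinian with nilpotent radical $J$) lift these to primitive orthogonal idempotents of $k(\Delta_A,\mathcal A)$. Because $k(\Delta_A,\mathcal A)$ is $k$-splitting, the standard Ext formula (cf.\ \cite{A}, Prop.\ III.1.14) gives
\[ g_{ij}=\dim_k\Ext^{1}_{k(\Delta_A,\mathcal A)}(S_i,S_j)=\dim_k\bigl(e_{kk}^{j}(J/J^{2})e_{ll}^{i}\bigr) \]
for any $1\le k\le n_j$ and $1\le l\le n_i$. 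Summing this identity over all such $k,l$ and using $1_{A_j}=\sum_k e_{kk}^{j}$, $1_{A_i}=\sum_l e_{ll}^{i}$ yields
\[ \dim_k A_j\,(J/J^2)\,A_i \;=\; n_i n_j\, g_{ij}. \]

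\textbf{Free-bimodule count.} By the very definition of the generalized path algebra, the degree-one component $J/J^{2}$ decomposes as a direct sum of the free bimodules appearing in the tensor algebra construction, and---via the matching with the natural quiver noted above---the piece $A_j\,(J/J^2)\,A_i$ is freely generated as an $A_j$-$A_i$-bimodule by the $t_{ij}$ arrows from $i$ to $j$ in $\Delta_A$. Since $A$ is $k$-splitting, $A_j\otimes_k A_i^{op}\cong M_{n_in_j}(k)$ is a simple $k$-algebra of $k$-dimension $n_i^2 n_j^2$, so
\[ \dim_k A_j\,(J/J^2)\,A_i \;=\; n_i^2 n_j^2\, t_{ij}. \]
Comparing the two expressions gives $n_in_j g_{ij}=n_i^2 n_j^2 t_{ij}$, hence $g_{ij}=n_i n_j t_{ij}$.

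\textbf{Main obstacle.} The only delicate point is justifying the freeness invoked in the second count: it relies both on the $k$-splitting hypothesis (which collapses the $A_j\otimes_k A_i^{op}$-structure to a matrix algebra, so ``rank'' and ``dimension divided by $n_i^2n_j^2$'' coincide) and on the fact that one works in the generalized path algebra itself, so no relations truncate the degree-one piece. With these two ingredients the proof is essentially the double-counting outlined above.
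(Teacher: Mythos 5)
Your proposal reproduces the paper's argument essentially verbatim: lift matrix-unit idempotents, apply the Ext-dimension formula of \cite{A} (Prop.\ III.1.14) to get $\dim_k A_j(J/J^2)A_i=n_in_jg_{ij}$, then use that the $t_{ij}$ arrows freely generate $A_j(J/J^2)A_i$ as an $A_j$-$A_i$-bimodule to get $\dim_k A_j(J/J^2)A_i=n_i^2n_j^2t_{ij}$, and compare. This is the same double-counting the paper carries out, so the proof is correct and not a different route.
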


For the number $m_{ij}$ of arrows from $i$ to $j$ in
 $\Gamma_{A}$, by Theorem 2.2,
 $t_{ij}=\lceil \frac{m_{ij}}{n_in_j}\rceil$. Then,
 $\lceil\frac{m_{ij}}{n_in_j}\rceil=\frac{g_{ij}}{n_in_j}$, equivalently, we
 have
\begin{corollary}
$m_{ij}\leq g_{ij}<m_{ij}+n_in_j$.
\end{corollary}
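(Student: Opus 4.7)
The proof is essentially a short arithmetic manipulation combining the two formulas already obtained just above the statement: Theorem~\ref{th2.2} gives
\[
t_{ij} = \left\lceil \frac{m_{ij}}{n_i n_j} \right\rceil,
\]
and the preceding Proposition gives $g_{ij} = n_i n_j \, t_{ij}$. Substituting, I get
\[
g_{ij} \;=\; n_i n_j \left\lceil \frac{m_{ij}}{n_i n_j} \right\rceil,
\]
so the whole content of the corollary is the elementary inequality
\[
x \;\leq\; n \left\lceil \tfrac{x}{n} \right\rceil \;<\; x + n
\]
for a non-negative integer $x$ and a positive integer $n$, applied with $x = m_{ij}$ and $n = n_i n_j$.

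The plan is therefore to write the Euclidean division $m_{ij} = q \cdot n_i n_j + s$ with $0 \leq s < n_i n_j$, split into two cases, and read off the two inequalities. In the case $s = 0$ the ceiling equals $q$, so $g_{ij} = q \cdot n_i n_j = m_{ij}$, giving equality on the left and strict inequality on the right (since $n_i n_j \geq 1$). In the case $s > 0$ the ceiling equals $q+1$, so $g_{ij} = (q+1) n_i n_j = m_{ij} + (n_i n_j - s)$; here $0 < n_i n_j - s < n_i n_j$, so both $m_{ij} < g_{ij}$ and $g_{ij} < m_{ij} + n_i n_j$ hold.

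There is no genuine obstacle here; the only thing to be careful about is that the preceding Proposition requires the natural quiver $\Delta_A$ to be acyclic (so that $k(\Delta_A,\mathcal{A})$ is Artinian and its Ext-quiver $\Gamma_{k(\Delta_A,\mathcal{A})}$ and the multiplicities $g_{ij}$ are well-defined) and that $A$ splits over $k$ (so that Theorem~\ref{th2.2} applies). Both hypotheses are in force for the setup of this section, so the substitution is legitimate and the conclusion follows immediately.
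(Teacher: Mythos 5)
Your proposal is correct and follows essentially the same route as the paper: both combine $g_{ij}=n_in_jt_{ij}$ from the preceding Proposition with $t_{ij}=\lceil m_{ij}/(n_in_j)\rceil$ from Theorem~\ref{th2.2}, whereupon the stated inequalities are the elementary ceiling bound $x\leq n\lceil x/n\rceil < x+n$. The paper simply states this as an ``equivalently'' without writing out the Euclidean division, which you spell out; that is a presentational difference, not a different argument.
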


%Every indecomposable projective module $P$ is determined by a primitive
%idempotent $e_i$, that is, $P\cong Ae_i$ for some $i$. And, there
%exists a bijective correspondence between the iso-classes of
%indecomposable projective modules and the iso-classes of irreducible
%modules. Denote by $I$ the indexes of the sets of these two iso-classes, then
%$I=(\Delta_{A})_{0}=(\Gamma_{A})_{0}$.

The set $\{P_i=Ae_{i}\;|\; i \in I\}$ is a complete set of
representatives of the iso-class of indecomposable projective $A$-module.
Then the basic algebra $B$ of $A$ is given by
$$B=\End_A(\coprod_{i\in I}P_i)\cong
\bigoplus_{i,j\in I}\Hom_A(P_i,P_j)
\cong\bigoplus_{i,j\in I}e_iAe_j.$$
Similarly the basic algebra of $k(\Delta_{A}, \mathcal{A})$ is
$$C=\End_{k(\Delta_A,\mathcal A)}(\bigoplus_{i\in I}k(\Delta_A,\mathcal
A)\overline e_i)\cong\bigoplus_{i,\,j\in I}\overline
e_ik(\Delta_A,\mathcal A)\overline e_j.$$ The relationship between
the two basic algebras $B$ and $C$ is given in \cite{LC1} when $A$
is of Gabriel-type.

As we have said, $k(\Delta_A,\mathcal A)$ is not Artinian when
$\Delta_A$ has an oriented cycle. Hence, we cannot affirm whether
$C$ is Morita equivalent to $k(\Delta_A,\mathcal A)$ in general.
But, $C$ is still decided uniquely by $k(\Delta_A,\mathcal A)$. So,
we call $C$ the {\em basic algebra associated to}
$k(\Delta_A,\mathcal A)$.

 As we have assumed  $\Delta_A$ is acyclic in this section,
  $C$ is Morita equivalent to
$k(\Delta_A,\mathcal A)$ since $k(\Delta_A,\mathcal A)$ is
Artinian. Hence, their Ext-quivers are the same, that is,
$\Gamma_{k(\Delta_A,\mathcal A)}=\Gamma_C$. And, since $C$
is basic, $\Gamma_C=\Delta_C$.

To sum up, assuming that the $ \Delta_{A}$ is acyclic,  we get the following diagram:

\[
\begin{array}{cclccclcc}
\Gamma_A&\stackrel{ t_{ij}=\lceil \frac{m_{ij}}{n_in_j}\rceil }{\supset}
&\Delta_A&=&\Delta_{k(\Delta_A,\mathcal A)}
&\stackrel{ t_{ij}=\frac{g_{ij}}{n_in_j}}{\subset}
&\Gamma_{k(\Delta_A,\mathcal A)}\\
\parallel &   &\cap &       &\cap &  &\parallel \\
\Gamma_B  & = & \Delta_B&\stackrel{m_{ij}\leq
g_{ij}<m_{ij}+n_in_j}{ \subset}  & \Delta_C & =& \Gamma_C
\end{array}
\]
where $\subset$,
$\supset$ and $\cap$ mean the embeddings of the dense sub-quivers.

\section{Diagram for non-splitting algebras}

In Section 2 and 4, the Artinian algebra $A$ is required to be splitting
over the ground field $k$ due to the definition of Ext-quiver. If
$A$ is not splitting over $k$, usually
 Ext-quiver and its representations have to be respectively replaced by
  the so-called {\em valued quiver} or {\em $k$-species}.
  On the other hand, the notion of the {\em diagram} of an Artinian algebra is
  introduced  in \cite{DK}  in the case when $A$ is not necessarily
  splitting over $k$.
 Now, we recall the definition of the diagram of an Artinian algebra
 $A$.

 Let $P_1, P_2, \cdots, P_s$ be pairwise non-isomorphic principal
 indecomposable projective modules over an Artinian algebra $A$, corresponding to the simple
 components $A_1,
  A_2, \cdots, A_s$  of the
  semisimple algebra $\overline{A}=A/r=\oplus_{i=1}^sA_i$.  Write $R_i=rP_i$, then $S_i=P_i/R_i$ is the
  corresponding irreducible $A$-module. At the same time,  $V_i=R_i/rR_i$ is a semisimple left $A$-module which
  has a direct sum decomposition  $V_i\cong
  \oplus_{j=1}^s S_j^{\oplus h_{ij}}$ for some unique integers $h_{ij}$. Define the quiver $D_{A}=(D_{0}, D_{1})$ by setting $D_{0}=\{1, \dots, s\}$ and  the arrow set $ D_{1}$ such that there are exactly $h_{ij}$
 many arrows from $i$
 to  $j$ for $ i, j \in D_{0}$. This quiver $D_{A}$ is
 called {\em the diagram of the algebra} $A$.

 Observe that:  (i)\ the projective cover  of $V_{i}$ is $P(R_i)\cong\oplus_{j=1}^s
  P_j^{\oplus h_{ij}}$ for each $i$; (ii)\  two Morita equivalent algebras
  have the same diagrams; (iii)\  the
  diagrams of $A$ and $A/r^2$ coincide.

%From the definition of diagrams, it is easy to see that the vertex sets of
%the diagram, of the Ext-quiver and the natural quiver of an Artinian
%algebra $A$ are always the same, i.e. $(D_A)_0=I$.
%Next, we discuss the relationship of arrow sets among Ext-quiver,
%natural quiver and diagram.

\begin{proposition}  Let $S_1, S_2, \cdots, S_s$ be
pairwise non-isomorphic irreducible modules over an Artinian algebra
$A$. Then, in the diagram $D_A$ of $A$, the number of arrows
from the vertex $i$ to the vertex $j$ is
 $$ h_{ij}=\dim_{D_j}\Ext_{A}(S_i, S_j)$$
 where $D_j=\End_{A}(S_j)$ is a division algebra.
\end{proposition}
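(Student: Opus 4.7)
The plan is to extract $\Ext^1_A(S_i,S_j)$ from the standard long exact sequence coming from the projective cover of $S_i$, and then identify the resulting $\Hom$-group in terms of the decomposition of $V_i=R_i/rR_i$.

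First I would take the short exact sequence $0\to R_i\to P_i\to S_i\to 0$ defining $R_i=rP_i$ and apply $\Hom_A(-,S_j)$, obtaining
\[
0\to \Hom_A(S_i,S_j)\to \Hom_A(P_i,S_j)\to \Hom_A(R_i,S_j)\to \Ext^1_A(S_i,S_j)\to 0,
\]
where the last term is $0$ because $P_i$ is projective. Since $S_j$ is simple (so $rS_j=0$), every homomorphism $P_i\to S_j$ factors through $P_i/rP_i=S_i$; hence the natural map $\Hom_A(S_i,S_j)\to \Hom_A(P_i,S_j)$ is an isomorphism. By the same reasoning, any $f\colon R_i\to S_j$ kills $rR_i$, so $\Hom_A(R_i,S_j)=\Hom_A(V_i,S_j)$. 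Combining these identifications yields a natural isomorphism
\[
\Ext^1_A(S_i,S_j)\cong \Hom_A(V_i,S_j).
\]

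Next I would plug in the decomposition $V_i\cong \bigoplus_{k=1}^{s}S_k^{\oplus h_{ik}}$ from the definition of the diagram. By Schur's lemma $\Hom_A(S_k,S_j)=0$ for $k\ne j$ and $\Hom_A(S_j,S_j)=D_j$, so
\[
\Hom_A(V_i,S_j)\cong D_j^{\oplus h_{ij}}
\]
as a right $D_j$-module (the $D_j$-action coming from post-composition in $\End_A(S_j)$). Taking $\dim_{D_j}$ on both sides gives $\dim_{D_j}\Ext^1_A(S_i,S_j)=h_{ij}$, which is exactly the claim.

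The whole argument is essentially a routine application of the projective-cover long exact sequence; I do not expect a serious obstacle. The only point that needs a little care is bookkeeping: verifying that the isomorphism $\Hom_A(V_i,S_j)\cong D_j^{\oplus h_{ij}}$ is $D_j$-linear on the side on which the statement counts dimensions, and in particular that the number $h_{ij}$ is well-defined (independent of the decomposition of the semisimple module $V_i$), so that the formula is intrinsic. Once these compatibilities are noted the proof is complete in a couple of lines.
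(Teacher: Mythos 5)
Your argument is correct and is essentially the paper's own proof: both apply $\Hom_A(-,S_j)$ to $0\to R_i\to P_i\to S_i\to 0$, observe that simplicity of $S_j$ kills the radical so that $\Hom_A(R_i,S_j)\cong\Hom_A(V_i,S_j)$ and $\Hom_A(P_i,S_j)\cong\Hom_A(S_i,S_j)$, and then read off $\dim_{D_j}\Hom_A(V_i,S_j)=h_{ij}$ from Schur's lemma. The only cosmetic difference is that the paper records the conclusion as a family of isomorphisms $\Ext^p_A(R_i,S_j)\cong\Ext^{p+1}_A(S_i,S_j)$ for $p\geq 0$, while you spell out the $p=0$ case, which is the one actually used.
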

\begin{proof} Applying the functor $\Hom_A(-,S_j)$ to the short exact sequence $0\rightarrow R_i\rightarrow P_i\rightarrow S_i\rightarrow 0$,
one gets the  long
exact sequence, which gives the isomorphisms
$$\Ext^{p}_A(R_i,S_j)\cong \Ext^{{p+1}}_A(S_i,S_j)
\quad \quad (p\geq 0).$$
Since $R_{i}/rR_i=\oplus_{l=1}^s S_l^{\oplus h_{il}}$,
we have
$\Hom_A(R_i/rR_i,S_j)\cong D_j^{h_{ij}}$. Now the
proposition follows from
$\Hom_A(R_i/rR_i,S_j)\cong \Hom_A(R_i,S_j)$.
\end{proof}

When $A$ is $k$-splitting, each $D_j=k$, then $
h_{ij}=\dim_k\Ext^1_{A}(S_i, S_j)=m_{ij}$. Hence, in this case, the
Ext-quiver $\Gamma_A$ is equal to the diagram $D_A$, whose
relationship with $\Delta_A$ is given in Theorem \ref{th2.2}.
When $A$
is not $k$-splitting, in the place of Ext-quiver $\Gamma_A$
is a valued quiver with modulation. The following gives a
relation of the diagram $D_{A}$ with the valued quiver and
associated modulation as well as with the natural quiver $\Delta_{A}$
for a general Artinian algebra $A$ .

Using the earlier notations  $A/r=A_1\oplus\cdots\oplus A_s$ with
$A_i\cong M_{n_i}(D_i)$ (the matrix algebra with entries in $D_i$).
Let $e_{pq}^{i}$ be the matrix
basis element of $A_{i}$ with 1 at $(p,q)$ position and zero
anywhere else. The irreducible left $A_{i}$-module
$S_{i}\cong A_{i}e^{i}_{11}$ and irreducible right $A_{i}$-module
$S^{op}_{i}\cong e^{i}_{11}A_{i}$. The $A_{j}$-$A_{i}$-module
$A_{j}(r/r^{2})A_{i}$ is semi-simple both as left $A_{j}$-module
and  as right $A_{i}$-module.   Let $D_{l}$ act on $S_{l}$ from
right and $D_{l}$ act on $S^{op}_{l}$ from left. For any
$A_{j}$-$A_{i}$-bi-module $M$ which is semisimple as $A_{j}$-module and $ A_{i}$-module respectively, we have the following:
\begin{itemize}
\item[(a)] $\Hom_{A_{j}}(S_{j}, M)$ is a $D_{j}$-$A_{i}$-bi-module
isomorphic to $ e^{j}_{11}M$ and the map
$ S_{j}\otimes _{D_{j}}\Hom_{A_{j}}(S_{j}, M)\rightarrow M $
defined by $ x\otimes \phi=\phi(x)$  is an isomorphism of
$ A_{j}$-$A_{i}$-bi-modules;

\item[(b)] $\Hom_{A_{i}}(S^{op}_{i},\Hom_{A_{j}}(S_{j}, M))$ is a
$D_{j}$-$D_{i}$-bimodule isomorphic to $e^{j}_{11}Me^{i}_{11}$
and the map
$ \Hom_{A_{i}}(S^{op}_{i},\Hom_{A_{j}}(S_{j}, M))
\otimes _{D_{i}}S_{i}^{op}\rightarrow \Hom_{A_{j}}(S_{j}, M)$
defined by $ f\otimes v=f(v) $ is an isomorphism of
$D_{j}$-$A_{i}$-bi-modules.

\item[(c)] The natural multiplication map
$S_{j}\otimes_{D_{j}}e^{j}_{11}Me_{11}^{i}\otimes S_{i}^{op}
\rightarrow M$ is an isomorphism of $A_{j}$-$A_{i}$-bi-module.
\end{itemize}
Taking $M=A_{j}(r/r^{2})A_{i}$, we have
an $A_{j}$-$A_{i}$-bi-module isomorphism
\begin{equation}\label{A-A-iso}
A_{j}(r/r^{2})A_{i}\cong S_{j}
\otimes_{D_{j}} e^{j}_{11}(r/r^{2})e^{i}_{11}
\otimes_{ D_{i}}S_{i}^{op}.
\end{equation}

 Denote by $Arrow_{\Delta_A}(i,j)$ the $k$-linear space
generated by the set of  arrows from $i$ to $j$ in the natural quiver $\Delta_A$.
Then, $t_{ij}=\dim_kArrow_{\Delta_A}(i,j)$ is the minimal
number of generator of
$A_{j}(r/r^{2})A_{i}$ as $A_{j}$-$A_{i}$-module. Therefore
there is a
surjective homomorphism of $A_{j}$-$A_{i}$-bimodules
$$ A_{j}\otimes_kArrow_{\Delta_A}(i,j)\otimes_k A_{i}
\twoheadrightarrow  A_j(r/r^2)A_i.$$
In particular, by taking $M=A_{j}(r/r^{2})A_{i}$, we have a surjective
$A_{j}$-$A_{i}$-bimodule  homomorphism
\begin{equation}
\label{A-A-surj} A_{j}\otimes_kArrow_{\Delta_A}(i,j)\otimes_k A_{i}
\twoheadrightarrow  S_{j}\otimes _{D_{j}} e^{j}_{11}(r/r^{2})e^{i}_{11}
\otimes_{D_{i}}S_{i}^{op}.
\end{equation}
Since $A_i\cong S_{i}\otimes_{D_{i}}S_{i}^{op}$ as $A_{i}$-$A_{i}$-bimodule,
by applying the exactor functors  $\Hom_{A_{j}}(S_{j}, -)$ and
$\Hom_{A_{i}}(S_{i}^{op}, -)$ consecutively we get a surjective map of
$D_{j}$-$D_{i}$-bimodules
\begin{equation} \label{D-D-surj}S_{j}^{op}\otimes_{k}Arrow_{\Delta_A}(i,j)
\otimes_k S_{i}
\twoheadrightarrow   e^{j}_{11}(r/r^{2})e^{i}_{11}.
\end{equation}

In the definition of the diagram $D_{A}$ of $A$, we have $V_{i}\cong
(r/r^{2})e^{i}_{11}$ as left $A$-modules. Then by (a) and (b) above,  $
h_{ij}=\dim_{D_{j}}(e^{j}_{11}(r/r^{2})e^{i}_{11})$. Note that
$S_{j}\otimes _{D_{j}}
e^{j}_{11}(r/r^{2})e^{i}_{11}\otimes_{D_{i}}S_{i}^{op}$ can be
generated by $h_{ij}$ many generators as an
$A_{j}$-$A_{i}$-bimodule. Then \eqref{A-A-surj} implies that
$t_{ij}\leq h_{ij}$.

Since $A$ is Artinian, we can compare the $D_{j}$-dimensions from
\eqref{D-D-surj} to get
\[  h_{ij} \leq n_{i}n_{j}\dim_{k}(D_{i})t_{ij}.\]
In conclusion, we have
\begin{lemma}\label{prop5.2}
For an Artinian $k$-algebra $A$, we have
\[t_{ij}\leq h_{ij}\leq (n_in_jdim_kD_i)t_{ij}.\]
\end{lemma}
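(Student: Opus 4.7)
The plan is to extract the two inequalities directly from the two objects introduced in the preceding discussion: the $A_j$-$A_i$-bimodule isomorphism \eqref{A-A-iso} and the surjective $D_j$-$D_i$-bimodule map \eqref{D-D-surj}. No new structural input is required; the proof is a combination of a generator-counting argument (for the lower bound) and a $k$-dimension comparison (for the upper bound).

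For the lower bound $t_{ij}\leq h_{ij}$, I would pick a $D_j$-basis $v_1,\dots,v_{h_{ij}}$ of the corner $e^j_{11}(r/r^2)e^i_{11}$, whose $D_j$-dimension is $h_{ij}$ by the identification of $h_{ij}$ recalled just before the statement. Using \eqref{A-A-iso}, I would show that the $h_{ij}$ elements $e^j_{11}\otimes v_k\otimes e^i_{11}$ generate $S_j\otimes_{D_j}e^j_{11}(r/r^2)e^i_{11}\otimes_{D_i}S_i^{op}\cong A_j(r/r^2)A_i$ as an $A_j$-$A_i$-bimodule. The verification is routine: given any pure tensor $s\otimes v\otimes t$, expand $v=\sum d_k v_k$ with $d_k\in D_j$, push each $d_k$ across $\otimes_{D_j}$ into $s$ to rewrite $s\otimes v\otimes t=\sum(sd_k)\otimes v_k\otimes t$; since $S_j=A_je^j_{11}$ and $S_i^{op}=e^i_{11}A_i$, one has $sd_k=a_k e^j_{11}$ and $t=e^i_{11}b$ for some $a_k\in A_j$, $b\in A_i$, so each summand equals $a_k\cdot(e^j_{11}\otimes v_k\otimes e^i_{11})\cdot b$. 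Since $t_{ij}$ is by definition the minimal cardinality of an $A_j$-$A_i$-bimodule generating set of $A_j(r/r^2)A_i$, this forces $t_{ij}\leq h_{ij}$.

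For the upper bound $h_{ij}\leq (n_in_j\dim_k D_i)t_{ij}$, I would simply compare $k$-dimensions on the two sides of \eqref{D-D-surj}. Using $\dim_k S_j^{op}=n_j\dim_k D_j$, $\dim_k S_i=n_i\dim_k D_i$, and $\dim_k Arrow_{\Delta_A}(i,j)=t_{ij}$, the source has $k$-dimension $n_in_jt_{ij}\dim_k D_i\dim_k D_j$. The target $e^j_{11}(r/r^2)e^i_{11}$ is a left $D_j$-vector space of dimension $h_{ij}$, so has $k$-dimension $h_{ij}\dim_k D_j$. Surjectivity of \eqref{D-D-surj} gives $h_{ij}\dim_k D_j\leq n_in_jt_{ij}\dim_k D_i\dim_k D_j$, and cancelling $\dim_k D_j$ yields the claimed bound.

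The main subtlety I expect is the generator-count step in the lower bound, which is essentially Morita-theoretic and relies on the choice of corner idempotents $e^j_{11}$, $e^i_{11}$; however, once \eqref{A-A-iso} is available the verification reduces to the tensor-product manipulation above. One curious asymmetry worth noting is that the upper bound involves $\dim_k D_i$ rather than $\dim_k D_j$; this is an artifact of the order in which the Hom functors were applied when \eqref{D-D-surj} was derived from \eqref{A-A-surj}, and a symmetric argument would yield the alternative bound $h_{ij}\leq(n_in_j\dim_k D_j)t_{ij}$.
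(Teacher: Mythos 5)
Your proof is correct and follows the same route as the paper: both inequalities come from the bimodule identification \eqref{A-A-iso} and the surjection \eqref{D-D-surj}, with the lower bound obtained by observing that a $D_j$-basis of the corner $e^j_{11}(r/r^2)e^i_{11}$ yields $h_{ij}$ bimodule generators of $A_j(r/r^2)A_i$ (so minimality of $t_{ij}$ forces $t_{ij}\leq h_{ij}$), and the upper bound by a dimension count on \eqref{D-D-surj}. The only cosmetic difference is that the paper compares $D_j$-dimensions directly in \eqref{D-D-surj} rather than $k$-dimensions, which avoids invoking finiteness of $\dim_k D_j$; your observation about the $i$/$j$ asymmetry in the upper bound is a sound remark and matches the asymmetric order in which the Hom functors were applied to derive \eqref{D-D-surj}.
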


Similar to the setup in Section 4,  let $ \varepsilon_{i} \in A$ be idempotents that are inverse images of
 $e^{i}_{11}$ such that $\{ \varepsilon_{1}, \dots,\varepsilon_{s}\} $
 is a complete set of non-isomorphic
primitive idempotents of $A$. Set $ \varepsilon=\sum_{i=1}^{s}\varepsilon_{i}$.
Then $ B=\End_{A}(A\varepsilon, A\varepsilon)\cong \varepsilon A \varepsilon $
 is the basic algebra of $A$
with radical $r_{B}=\varepsilon r_{A} \varepsilon $. We know that
the diagram $D_B$ of $B$ is the same as that of $A$. In this case
$h_{ij}=\dim_{D_{j}}\varepsilon_{j}r_{B}/r_{B}^{2}\varepsilon_{i}$.
The natural quiver $ \Delta_{B}$ of $B$ has the number  of
arrows $|Arrow_{\Delta_{B}}(i,j)|$ from $i$ to $j$ equal to the
minimal number of generators of
$\varepsilon_{j}r_{B}/r_{B}^{2}\varepsilon_{i}$ as $
D_{j}$-$D_{i}$-bimodules. Note that
$\varepsilon_{j}r_{B}/r_{B}^{2}\varepsilon_{i} \cong
e_{11}^{j}r_{A}/r_{A}^{2}e_{11}^{i}$ as $D_{j}$-$D_{i}$-bimodules.
Then $S_{j}\otimes_{D_{j}}
e_{11}^{j}r_{A}/r_{A}^{2}e_{11}^{2}\otimes_{D_{i}}S^{op}_{i}$ can
be generated by $|Arrow_{\Delta_{B}}(i,j)|$ many elements as an
$A_{j}$-$A_{i}$-bimodule. We therefore get
\begin{equation} |Arrow_{\Delta_{A}}(i,j)|\leq |Arrow_{\Delta_{B}}(i,j)|.
\end{equation}

 For the basic algebra $B$, the system
 $\{ \varepsilon_{j}r_{B}/r_{B}^{2}\varepsilon_{i}\;
 |\; i,j =1,\dots, s\} $
 together with
 $ \{ D_{i}\; |\; i=1, \dots, s\}$ defines a $k$-species~\cite{R} .
 If all $D_{i}$ are finite dimensional over $k$,
 then the system is a modulation of the valued quiver for the algebra $A$ \cite{DR}.

\begin{theorem}\label{th5.3}
For an Artinian algebra $A$ with radical $r_A$, let
$\{S_1,\cdots,S_s\}$ be the complete set of all
non-isomorphic irreducible $A$-modules and $D_i=\End_A(S_i)$ for
any $i=1,\cdots,s$. Let $B$ be the basic algebra of $A$
with radical $r_{B}$ such that $
B/r_{B}=\prod_{i=1}^{s}D_{i}$. Let  $t^{A}_{ij}$ (resp. $t_{ij}^B$) and
$h^{A}_{ij}$ (resp. $h_{ij}^B$)  be the numbers of arrows from $i$ to $j$ in
the natural quiver $\Delta_A$ (resp. $\Delta_B$) and the diagram $D_A$ (resp. $D_B$)
respectively. For any $i,j=1,\cdots,s$, we have
\begin{itemize}
\item[(i)] $t^{A}_{ij}\leq \lceil
\frac{t^{B}_{ij}}{n_{i}n_{j}}\rceil\leq t^{B}_{ij}$;
\item[(ii)]
$ t^{A}_{ij}\leq \lceil \frac{h^{A}_{ij}}{n_{i}n_{j}}\rceil \leq
t^{A}_{ij}\dim_{k}D_{i}.$
\end{itemize}
\end{theorem}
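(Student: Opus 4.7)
The plan is to reduce both inequalities to a single Morita-theoretic computation. View $M := A_j(r/r^2)A_i$ as a left module over $R := A_j \otimes_k A_i^{op}$. Using $A_l \cong M_{n_l}(D_l)$ one gets a ring isomorphism $R \cong M_{n_i n_j}(D)$ with $D := D_j \otimes_k D_i^{op}$. The idempotent $e := e^j_{11} \otimes e^i_{11}$ is a matrix unit in $R$ with $eRe \cong D$, and $eM$ coincides with $U := e^j_{11}(r/r^2)e^i_{11}$. Identifying $U$ with $\varepsilon_j(r_B/r_B^2)\varepsilon_i$ as $D_j$-$D_i$-bimodules (as already used in Section~4), the minimum number of $D_j$-$D_i$-bimodule generators of $U$, equivalently its minimum number of left $D$-module generators, is exactly $t^B_{ij}$.

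For (i), the Morita equivalence between $D$ and $M_{n_i n_j}(D)$ sends the free rank-one module $R$ to $eR \cong D^{\oplus n_i n_j}$; hence $M$ admits a surjection from $R^{\oplus t}$ iff $U$ admits a surjection from $D^{\oplus n_i n_j t}$. The smallest such $t$ is $\lceil t^B_{ij}/(n_i n_j)\rceil$, which (since $A_j$-$A_i$-bimodule generators of $M$ are the same as left $R$-module generators) equals $t^A_{ij}$. This gives the first inequality of (i), while the second, $\lceil t^B_{ij}/(n_i n_j)\rceil \leq t^B_{ij}$, is the trivial bound $\lceil a/b\rceil \leq a$ for integers $a \geq 0$, $b \geq 1$.

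For (ii), I first note that $h^A_{ij} = \dim_{D_j} U$: indeed $V_i = (r/r^2)\varepsilon_i$, and the multiplicity of $S_j$ in $V_i$ as a left $A/r$-module equals the $D_j$-dimension of its $e^j_{11}$-component, which is $U$. Any left $D_j$-generating set of $U$ is a fortiori a $D_j$-$D_i$-bimodule generating set, so $t^B_{ij} \leq h^A_{ij}$; together with the formula from (i) and monotonicity of the ceiling, this yields $t^A_{ij} = \lceil t^B_{ij}/(n_i n_j)\rceil \leq \lceil h^A_{ij}/(n_i n_j)\rceil$. The remaining inequality $\lceil h^A_{ij}/(n_i n_j)\rceil \leq t^A_{ij}\dim_k D_i$ is immediate from Lemma~\ref{prop5.2}, which gives $h^A_{ij} \leq n_i n_j(\dim_k D_i) t^A_{ij}$. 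The main subtlety is that $D = D_j \otimes_k D_i^{op}$ is in general neither simple nor a division ring, so Lemma~2.3 does not apply directly; the argument above bypasses this by using only the bare Morita equivalence $D\text{-Mod} \simeq M_n(D)\text{-Mod}$ and the progenerator identification $eR \cong D^n$, where $n = n_i n_j$, which is robust enough to handle a non-simple base ring.
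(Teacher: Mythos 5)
Your proof is correct, and for part (i) it takes a genuinely different and cleaner route than the paper's. The paper works concretely: given a $D_j$-$D_i$-bimodule generated by $m$ elements, it constructs an explicit chain of $A_j$-$A_i$-bimodule surjections
\[
(S_j\otimes_k S_i^{op})^{\oplus qn_in_j}\twoheadrightarrow (S_j\otimes_k S_i^{op})^{\oplus m}\cong S_j\otimes_{D_j}(D_j\otimes_k D_i)^{\oplus m}\otimes_{D_i}S_i^{op}\twoheadrightarrow S_j\otimes_{D_j}M\otimes_{D_i}S_i^{op},
\]
and then uses that $(S_j\otimes_k S_i^{op})^{\oplus n_in_j}\cong A_j\otimes_k A_i$ is cyclic as a bimodule to read off the generator count $q=\lceil m/(n_in_j)\rceil$. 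You instead package $R=A_j\otimes_k A_i^{op}\cong M_{n_in_j}(D)$ with $D=D_j\otimes_k D_i^{op}$, identify $U=e^j_{11}(r/r^2)e^i_{11}$ with the Morita image $eM$, and transport free resolutions across the equivalence $R\text{-Mod}\simeq D\text{-Mod}$ using $eR\cong D^{\oplus n_in_j}$. This is exactly the abstract content of what the paper does by hand, and it buys you something: because a Morita equivalence preserves and reflects epimorphisms, your argument shows $R^{\oplus t}\twoheadrightarrow M$ if and only if $D^{\oplus n_in_j t}\twoheadrightarrow U$, hence the sharper conclusion $t^A_{ij}=\lceil t^B_{ij}/(n_in_j)\rceil$ (the paper only records the one-sided inequality $t^A_{ij}\leq\lceil t^B_{ij}/(n_in_j)\rceil$, since its explicit surjections run in one direction only). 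Your correct observation that $D$ is generally neither simple nor a division ring, so that Lemma 2.3 cannot be invoked, is precisely why the paper too avoids that lemma and works by hand — your Morita formulation handles this uniformly. Part (ii) of your argument ($t^B_{ij}\leq h^A_{ij}=\dim_{D_j}U$ followed by Lemma~\ref{prop5.2} and ceiling monotonicity) coincides with the paper's, up to the cosmetic choice of writing $h^A_{ij}$ in place of $h^B_{ij}$ (the two being equal since $U\cong\varepsilon_j(r_B/r_B^2)\varepsilon_i$).
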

\begin{proof} It follows from Proposition 5.1 that  $h^A_{ij}=h_{ij}^B$.
If $M$ is a $D_j$-$D_i$-bimodule which can be
generated by $m$ many elements as a $D_j$-$D_i$-bimodule, then there is a surjective map
$(D_j\otimes_k D_i)^{\oplus m}\rightarrow M$ as
$D_j$-$D_i$-bimodules. Thus we have a surjective map $S_j\otimes_{D_j}
(D_j\otimes_k D_i)^{\oplus m} \otimes_{D_i}S_i^{op}\rightarrow
S_j\otimes_{D_j} M \otimes_{D_i}S_i^{op} $ of $
A_j$-$A_i$-bimodules.  Let $q=\lceil \frac{m}{n_jn_i}\rceil$. Hence
we have a surjective map $ (S_j\otimes_{k}S_i^{op})^{\oplus
qn_jn_i}\rightarrow (S_j\otimes_{k}S_i^{op})^{\oplus m}\cong
S_j\otimes_{D_j} (D_j\otimes_k D_i)^{\oplus m}
\otimes_{D_i}S_i^{op}$ as $A_j$-$A_i$-bimodules.
 Since  $(S_j\otimes_{k}S_i^{op})^{\oplus n_jn_i}\cong
 S_j^{\oplus n_j}\otimes_{k}(S_i^{op})^{\oplus n_i}$ as
 $A_j$-$A_i$-bimodules, which can be generated
 by one element as an $A_j$-$A_i$-bimodule,
 $(S_j\otimes_{k}S_i^{op})^{\oplus
qn_jn_i}$ can be generated by $q$ elements as $A_j$-$A_i$-bimodule. Therefore $
S_j\otimes_{D_j}M\otimes_{D_i} S_i^{op}$ can be generated by $q$
elements as an $A_j$-$A_i$-module.

Applying the above argument to $M=e_{11}^j (r_A/r^2_A)e_{11}^i$,
where $M$ can be generated by $t^B_{ij}$ many elements as a
$D_j$-$D_i$-bimodule, we conclude that $A_j(r_A/r^2_A)A_i$ can be generated by
$\lceil \frac{t^{B}_{ij}}{n_jn_i}\rceil$ many elements as
$A_j$-$A_i$-bimodules. This shows that $ t_{ij}^A \leq \lceil
\frac{t^{B}_{ij}}{n_jn_i}\rceil \leq t^B_{ij}$, where the first inequality follows from the fact that $t^A_{ij}$ is the minimum.

To show (ii), we note that $t^B_{ij}\leq h^B_{ij}=h^A_{ij}\leq n_i n_j\dim_kD_i t_{ij}^A$ by Lemma~\ref{prop5.2}.
By (i), we have
$ t^A_{ij}\leq \lceil \frac{t^B_{ij}}{n_in_j}\rceil \leq  \lceil \frac{h^A_{ij}}{n_in_j}\rceil \leq
 \lceil \frac{n_i n_j\dim_kD_i t_{ij}^A}{n_in_j}\rceil=t_{ij}^A\dim_kD_i .$
\end{proof}

The relation (ii) implies that $t_{ij}$ is never larger than $h_{ij}$,
and $t_{ij}\not=0$ if and only if $h_{ij}\not=0$ for any
$i,j\in(\Delta_A)_0$, which gives the relation between the natural quiver and the diagram.

The relation (i) gives the relation between the natural quiver and the
Ext-quiver of an Artinian algebra over an arbitrary field $k$. Note
that the formula $t_{ij}=\lceil \frac{m_{ij}}{n_in_j} \rceil$ in
Theorem \ref{th2.2} holds only in the case that $A$ is $k$-splitting, since the diagram $D_A$ and the ext-quiver
$\Gamma_A$ are the same when $A$ is splitting over $k$.

 In general, when $A$ is basic (i.e, $A_i=D_i$ is a division ring) without the splitting
condition over $k$, the natural
quiver $\Delta_A$ will not be the same as either $\Gamma_A$ nor $D_A$. In this case, the discussion before
Theorem~\ref{th5.3} relates $\Delta_A$ and $D_A$ by the species $\{D_i, \varepsilon_j
r_A/r_A^2\varepsilon_i\}$ \cite{R} as   follows.

\begin{itemize}
\item[(i)]  $h_{ij}=\dim_{D_j}(A_j r_A/r_A^2 A_i)$ and $ t_{ij}$ is minimal number of generators of $A_j
r_A/r_A^2 A_i$ as $ A_j$-$A_i$-bimodule;
\item[(ii)]
if in addition that $A$ is splitting over $k$, then $h_{ij}=t_{ij}=m_{ij}$,
which means $D_A=\Delta_A=\Gamma_A$.
\end{itemize}

\begin{example} Let $ E/k$ be a field extension. Consider the algebra
$A=\begin{bmatrix}E &M \\
0& E
\end{bmatrix}$
 is a basic algebra over $k$ for any $E$-$E$-bimodule $M$. With different choices of
 $M$ one can have either $ h_{ij}=t_{ij}$ (taking $ M=E$) or $h_{ij}$ and $t_{ij}$ to be quite different
 (taking $M=E\otimes_k E$).
 \end{example}

%We say a quiver to be of {\em almost Dynkin (or affine) type} provided
%that the new quiver, resulted by reducing multiple arrows with
%the same start and ending vertices into one single arrow is  Dynkin (or affine) type.
%An almost Dynkin quiver which is not of  Dynkin type is call proper almost Dynkin type. Similarly, an almost
%affine quiver with is not of  affine type is called proper almost affine type.
%neither Dynkin nor affine type, we call this {\em proper almost
%Dynkin-affine type}. Respectively, we can give the definitions of
%{\em (proper) almost Dynkin type} and {\em (proper) almost affine
%type}.

%By Theorem \ref{th5.3}, when $h_{ij}=\dim_kD_i$, we always have
%$t_{ij}=1$. In this case, when $D_A$ is of almost Dynkin type, then
 %$\Delta_A$ is of Dynkin type; when $D_A$ is of almost affine type,
%then $\Delta_A$ is of affine type.

%Conversely, when $t_{ij}=1$, we cannot get $h_{ij}=dim_kD_i$ in
%general. Hence, when $\Delta_A$ is of either Dynkin type or affine
%type, in general, $D_A$ is not of almost Dynkin-affine type.

\section { Hereditary algebras as generalized path algebras }

We have mentioned the result in \cite{DK} that any finite dimensional hereditary
algebra $A$ is  isomorphic to the tensor algebra $T(A/r,
r/r^2)$ if the quotient algebra $A/r$ is separable for the radical $r$.
 As a comparison, in this section we will show  that an Artinian hereditary algebra  $A$ is
always  isomorphic to $k(\Delta_A,\mathcal A)$ if $A$ is of
Gabriel-type with admissible defining ideal.

It is proved in (\cite{DK}, Corollary 3.7.3) that the diagram $D_A$ has
no cycles if  $A$ is a finite dimensional hereditary algebra. In fact this is true for Artinian hereditary algebras.

\begin{lemma}\label{newlemm6.1}\cite{HGK}
If a ring $A$ is hereditary, then any nonzero homomorphism $\varphi: P_1\rightarrow P_2$ of indecomposable projective $A$-modules is a monomorphism.
\end{lemma}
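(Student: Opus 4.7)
The plan is to exploit the defining property of heredity together with indecomposability of $P_1$. Write $K=\ker(\varphi)$ and $I=\operatorname{im}(\varphi)$, so that we have a short exact sequence
\[
0\longrightarrow K\longrightarrow P_1\stackrel{\varphi}{\longrightarrow} I\longrightarrow 0,
\]
where $I$ sits inside the projective module $P_2$. Since $A$ is hereditary, every submodule of a projective module is projective; applying this to $I\subseteq P_2$ shows that $I$ is projective.

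Because $I$ is projective, the above short exact sequence splits, yielding a direct sum decomposition $P_1\cong K\oplus I$. Now invoke the assumption that $P_1$ is indecomposable: one of the two summands must be zero. The summand $I$ is nonzero by hypothesis on $\varphi$, so necessarily $K=0$, which is exactly the claim that $\varphi$ is a monomorphism.

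This is essentially the only natural strategy, and there is no real obstacle — the argument uses only the definition of hereditary (closure of projectivity under submodules) and the projectivity/splitting criterion. The only subtlety worth mentioning is that heredity is being used to guarantee projectivity of the image $I$ (as a submodule of the projective $P_2$); one could alternatively apply it to $K\subseteq P_1$, but the splitting argument still requires projectivity of $I$ to conclude the direct sum decomposition, so the version above is the cleanest.
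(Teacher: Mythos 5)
Your proof is correct and is the standard argument; the paper itself gives no proof but merely cites \cite{HGK} for this fact, and your reasoning (image is a submodule of a projective, hence projective by heredity, hence the sequence splits and indecomposability of $P_1$ forces the kernel to vanish) is exactly the textbook argument one finds there.
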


\begin{lemma}\label{newlemm6.2}
If an Artinian algebra $A$ is hereditary, then its diagram $D_A$ has
no cycles.
\end{lemma}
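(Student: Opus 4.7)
The plan is to promote each arrow $i \to j$ of $D_A$ to an injection $P_j \hookrightarrow rP_i$, then chain these injections around a hypothetical cycle to produce an embedding $P_{i_1} \hookrightarrow r^n P_{i_1}$, and finally iterate and invoke nilpotency of the radical to derive a contradiction. First I would decompose $R_i = rP_i$ using the hereditary hypothesis: $R_i$, being a submodule of the finitely generated projective $P_i$, is itself projective, and because $A$ is Artinian it is a finite direct sum of indecomposable projectives. Comparing tops with the defining relation $R_i/rR_i = V_i \cong \bigoplus_j S_j^{\oplus h_{ij}}$ forces $R_i \cong \bigoplus_j P_j^{\oplus h_{ij}}$. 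Hence each arrow $i \to j$ of $D_A$ yields a direct-summand embedding $P_j \hookrightarrow R_i \subseteq P_i$ whose image is contained in $rP_i$.

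Now suppose for contradiction that $D_A$ admits an oriented cycle $i_1 \to i_2 \to \cdots \to i_n \to i_1$ (allowing loops or repeated vertices). Each arrow $i_k \to i_{k+1}$ (indices modulo $n$) supplies an $A$-linear injection $\phi_k : P_{i_{k+1}} \hookrightarrow P_{i_k}$ with $\phi_k(P_{i_{k+1}}) \subseteq rP_{i_k}$. Set $\psi = \phi_1 \circ \cdots \circ \phi_n : P_{i_1} \to P_{i_1}$; this is injective as a composition of injections. Since each $\phi_k$ is $A$-linear, we have $\phi_k(r^{j} P_{i_{k+1}}) = r^{j}\phi_k(P_{i_{k+1}}) \subseteq r^{j+1}P_{i_k}$, and iterating along the cycle gives $\psi(P_{i_1}) \subseteq r^n P_{i_1}$. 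The same argument applied to $\psi$ itself produces injections $\psi^m : P_{i_1} \hookrightarrow r^{mn} P_{i_1}$ for every $m \geq 1$.

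Finally, because $A$ is Artinian the Jacobson radical $r$ is nilpotent, so $r^{mn} = 0$ for $m$ sufficiently large. This forces $P_{i_1} = 0$, contradicting the fact that $P_{i_1}$ is the projective cover of a nonzero simple module, and hence $D_A$ has no cycles. The only step of any substance is the bookkeeping behind the inclusion $\psi(P_{i_1}) \subseteq r^n P_{i_1}$, which follows smoothly from $A$-linearity of the $\phi_k$; in this route Lemma~\ref{newlemm6.1} is not strictly required, since the necessary injections arise as direct-summand inclusions inside $R_i$ rather than as arbitrary nonzero maps between indecomposable projectives.
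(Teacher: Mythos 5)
Your proof is correct, and it follows a genuinely different route from the paper's. The paper appeals to Lemma~\ref{newlemm6.1}: every nonzero homomorphism between indecomposable projectives over a hereditary ring is a monomorphism. It takes the map $f_\sigma$ attached to each arrow, notes it is monic but not epic, composes around a cycle to obtain an injective non-surjective $f : P_i \to P_i$, and then derives the contradiction from the strictly descending chain $P_i \supsetneqq f(P_i) \supsetneqq f^2(P_i) \supsetneqq \cdots$, which violates the descending chain condition because $P_i$ embeds as a left ideal of the Artinian ring $A$. You instead obtain the injections structurally: since $A$ is hereditary and Artinian, $rP_i$ is a finitely generated projective whose top is $V_i$, so by Krull--Schmidt $rP_i \cong \bigoplus_j P_j^{\oplus h_{ij}}$, and the arrows supply split monomorphisms $P_j \hookrightarrow rP_i$ directly. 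You then close the argument via nilpotency of $r$ rather than DCC, by tracking radical powers through the composite to obtain $\psi^m(P_{i_1}) \subseteq r^{mn}P_{i_1} = 0$. The paper's route buys a shorter derivation by leaning on the cited monomorphism lemma and needs no analysis of the structure of $rP_i$; yours is self-contained modulo the standard decomposition of $rP_i$ and makes the failure mechanism more explicit (the image is driven into successively deeper radical layers), at the cost of invoking Krull--Schmidt and the identification of summands by tops. Both are standard, and both exploit the hereditary hypothesis in an equivalent but differently packaged way.
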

\begin{proof}
For any vertices $i$ and $j$, if there is an arrow $\sigma$ from $j$ to $i$ in $D_A$, then there exists a non-zero homomorphism $f_{\sigma}: P_j\rightarrow P_i$ of projective covers irreducible modules $S_{j}$ and $S_{i}$ such that $ f_{\sigma}(P_{j})\subseteq r P_{i}$. By Lemma \ref{newlemm6.1}, $f_\sigma$ is always a monomorphism, but not onto.

Suppose $D_A$ has a cycle $\sigma_1\sigma_2\cdots\sigma_s$ with both tail and head at the vertex $i$. Then, $f=f_{\sigma_1}f_{\sigma_2}\cdots f_{\sigma_s}$ is a monomorphism from $P_i$ to $P_i$ since each $f_{\sigma_i}$ is a monomorphism for any $i$. It is not isomorphic, i.e. $f(P_i)\subsetneqq P_i$. Moreover, it follows the infinite sequence:
\[P_i\varsupsetneqq f(P_i)\varsupsetneqq f^2(P_i)\varsupsetneqq \cdots \varsupsetneqq f^l(P_i)\varsupsetneqq\cdots.
\]
Note that $P_i$ is isomorphic to a left ideal of $A$, thus the above
contradicts to the fact $A$ is Artinian.
\end{proof}

\begin{proposition}\label{prop6.1}$\;$ Let $A$ be a hereditary
  Artinian  algebra.
Then the natural quiver $\Delta_A$ of $A$ is finite and acyclic.
\end{proposition}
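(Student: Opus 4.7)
The finiteness part is essentially immediate from the definitions, so I would dispatch it first. The vertex set $\Delta_0$ is in bijection with the set of blocks of $A/r$, equivalently with the set of isomorphism classes of irreducible $A$-modules; since $A$ is Artinian there are only finitely many such classes. For each pair $(i,j)$, the number of arrows $t_{ij}=\rk_{A_j,A_i}({}_jM_i)$ is bounded by the number of generators of ${}_jM_i=A_j\cdot (r/r^2)\cdot A_i$ as an $A_j$-$A_i$-bimodule, which is finite since $A$ (hence $r/r^2$) is Artinian. Thus $\Delta_A$ has finitely many vertices and finitely many arrows.

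For acyclicity, the plan is to reduce to Lemma~\ref{newlemm6.2}, which says the diagram $D_A$ has no oriented cycles. The bridge I would use is the comparison between $\Delta_A$ and $D_A$ recorded at the end of Section 5: by Theorem~\ref{th5.3}(ii) we have $t_{ij}\le \lceil h_{ij}/(n_in_j)\rceil \le t_{ij}\dim_kD_i$, from which the paper already observes that $t_{ij}\neq 0$ if and only if $h_{ij}\neq 0$ for all $i,j\in(\Delta_A)_0$.

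With that equivalence in hand, the argument is short. Suppose for contradiction that $\Delta_A$ contains an oriented cycle $i_1\to i_2\to\cdots\to i_\ell\to i_1$ (with $\ell\ge 1$, so loops are included). Each edge $i_p\to i_{p+1}$ corresponds to $t_{i_p,i_{p+1}}\neq 0$, hence by the equivalence above $h_{i_p,i_{p+1}}\neq 0$, giving an arrow from $i_p$ to $i_{p+1}$ in $D_A$. Concatenating these arrows produces an oriented cycle in $D_A$, contradicting Lemma~\ref{newlemm6.2}. Therefore $\Delta_A$ is acyclic.

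The only subtle point in this plan is making sure the direction conventions match between $\Delta_A$ and $D_A$ so that a cycle in one really does transport to a cycle in the other; but since the equivalence $t_{ij}\neq 0\Leftrightarrow h_{ij}\neq 0$ is stated vertex-by-vertex using the same indexing of simple modules, this is mechanical. There is no real obstacle beyond invoking the two earlier results in the right order.
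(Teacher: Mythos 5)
Your proof is correct and follows essentially the same route as the paper's: the paper also dispatches finiteness as trivial, then invokes the comparison in Theorem~\ref{th5.3} (equivalently Lemma~\ref{prop5.2}, which gives $t_{ij}\neq 0\Leftrightarrow h_{ij}\neq 0$) to conclude that $\Delta_A$ is acyclic if and only if $D_A$ is, and finishes with Lemma~\ref{newlemm6.2}. Your version merely spells out the finiteness argument and the cycle-transport step in more detail, and your worry about direction conventions is harmless here since both $t_{ij}$ and $h_{ij}$ are indexed through the same map $V_i\mapsto e_{11}^j(r/r^2)e_{11}^i$.
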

\begin{proof} Finiteness of $\Delta_A$ is trivial. According
to the relation between  $\Delta_A$ and $D_A$ in
 Theorem \ref{th5.3}, $\Delta_A$ is acyclic if and only if $D_A$
  is acyclic.  By Lemma \ref{newlemm6.2}, $D_A$ is acyclic.
  Hence, $\Delta_A$  is acyclic.
\end{proof}

By definition, normal generalized path algebras
 can be thought as a special class of tensor algebras, which are always hereditary due to \cite{A}.
The following main result in this section can be thought as a partial converse of this statement.
\begin{proposition} \label{prop6.3}  Let $\pi: B\rightarrow A$ be a surjective homomorphism of two
hereditary algebras $A$ and $B$ such that $I=\ker (\pi)\subseteq
\rad(B)^2$. If $B$ is Artinian, then $I=0$ and $\pi$ is an
isomorphism.
\end{proposition}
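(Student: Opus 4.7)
The plan is to show that the composition lengths of $A$ and $B$ as modules over themselves coincide; combined with the short exact sequence $0\to I\to B\to A\to 0$, this forces $I=0$. Because $I\subseteq r_B^2\subseteq r_B$, one has $\pi^{-1}(r_A)=r_B+I=r_B$ and $\pi^{-1}(r_A^2)=r_B^2+I=r_B^2$, so $\pi$ induces both an algebra isomorphism $\bar\pi\colon \bar B:=B/r_B\to\bar A:=A/r_A$ and a $\bar B$-bimodule isomorphism $r_B/r_B^2\cong r_A/r_A^2$; hence the pairs $(\bar B,r_B/r_B^2)$ and $(\bar A,r_A/r_A^2)$ are isomorphic.

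The key input is the following structural fact for any hereditary Artinian algebra $C$ with radical $r$: multiplication induces a $\bar C$-bimodule isomorphism $(r/r^2)^{\otimes_{\bar C}n}\cong r^n/r^{n+1}$ for every $n\geq 1$. To prove it, observe that $r$ is a submodule of the projective $C$-module $C$, so by heredity $r$ is projective, hence flat, as a left $C$-module, and symmetrically as a right $C$-module; by iteration every $r^n$ is flat on both sides, and the multiplication map $r^{\otimes_Cn}\to r^n$ is an isomorphism. Tensoring $0\to r\to C\to\bar C\to 0$ on the left with the flat module $r^n$ yields $r^n\otimes_C\bar C\cong r^n/r^{n+1}$. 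Using associativity, the left-hand side rewrites as $r^{n-1}\otimes_C(r\otimes_C\bar C)=r^{n-1}\otimes_C(r/r^2)$; and since $r/r^2$ is a $\bar C$-bimodule, a standard computation gives $r^{n-1}\otimes_C(r/r^2)=(r^{n-1}/r^n)\otimes_{\bar C}(r/r^2)$. An induction on $n$ now completes the claim.

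Applying this structural isomorphism to both $A$ and $B$ (both hereditary Artinian) and combining with the first paragraph, we obtain $\bar B$-bimodule isomorphisms $r_A^n/r_A^{n+1}\cong r_B^n/r_B^{n+1}$ for every $n$. Since $B$ is Artinian the radical-adic filtration is finite, and summing composition lengths over the graded pieces gives $\ell_A(A)=\ell_B(B)$, where $\ell$ denotes composition length. The short exact sequence $0\to I\to B\to A\to 0$ of left $B$-modules yields $\ell_B(B)=\ell_B(I)+\ell_B(A)$, and because $\bar\pi$ identifies the simple $B$- and $A$-modules we have $\ell_B(A)=\ell_A(A)$. Therefore $\ell_B(I)=0$, so $I=0$ and $\pi$ is an isomorphism. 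The main technical point is the structural isomorphism in the second paragraph; this is the only place where the hereditary hypothesis is used essentially, namely through the flatness of the radical.
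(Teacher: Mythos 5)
Your proof is correct, and it takes a genuinely different route from the paper's. The paper proceeds by induction on the graded pieces via projective covers: it shows that $r_B^{k}/r_B^{k+1}\cong R^{k}/R^{k+1}$ (where $R=r_B/I$) by identifying $r_B^{k-1}$ and $R^{k-1}$ with their own projective covers (this is where heredity enters), and then finishes by picking a minimal $p$ with $I\not\subseteq r_B^{p}$ and observing that the natural surjection $r_B^{p-1}/r_B^{p}\twoheadrightarrow R^{p-1}/R^{p}$, being forced to be an isomorphism, kills the chosen element of $I$. You instead extract from heredity the flatness of all radical powers and prove the stronger structural fact $\gr(C)\cong T_{\bar C}(r/r^{2})$ for any hereditary Artinian algebra $C$; combined with the observations $\pi^{-1}(r_A)=r_B$ and $\pi^{-1}(r_A^{2})=r_B^{2}$ (both using $I\subseteq r_B^{2}$), this shows the two associated graded algebras agree, and you conclude by a clean composition-length count rather than by tracking an element. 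The trade-off is roughly this: your route isolates a reusable lemma (the tensor-algebra description of $\gr$ of a hereditary Artinian algebra, which is in effect the $I=0$ case of the theorem being proved) and avoids the somewhat delicate point the paper has to handwave (``if one tracks the isomorphism, one would find that the above isomorphism is actually induced from the quotient map''), since a length comparison does not care which isomorphism one has; the paper's route is more self-contained in not needing the equivalence of left and right heredity for Artinian rings, which your argument implicitly uses when taking $r^{n}$ flat on both sides, though this is standard for two-sided Artinian rings.
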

\begin{proof} Let $r_B=\rad(B)$. Since $B$ is Artinian, then $ r_B^n=0 $ for some $n$.  We have $A\cong B/I$ and
$r_B^s\subset
I\subset r_B^2$ for some positive integer $s$. It is enough to prove
that $I=0$.

Let $R=r_B/I$. By induction on $k$, we will prove that for any $k\geq
0$, \begin{equation}\label{isom6}
 R^{k}/R^{k+1}\cong r_B^{k}/r_B^{k+1}
\end{equation}
as $A$-modules. Here the $A$-module structure of  $r_B^{k}/r_B^{k+1}$
can be induced naturally from the $B$-module
structure of $r_B^{k}/r_B^{k+1}$ since $I(r_B^{k}/r_B^{k+1})=0$.

When $k=1$, we have $R/R^2=(r_B/I)/(r_B/I)^2=(r_B/I)/(r_B^2/I)\cong r_B/r_B^2$
since $I\subset r_B^2$.

Suppose that (\ref{isom6}) holds for $k-1\geq 0$, that is,
$R^{k-1}/R^{k}\cong r_B^{k-1}/r_B^{k}$ as $A$-modules. For the case of
 $k$, we discuss as follows.

We first note that for any projective $B$-module $Q$,
$Q/IQ=A\otimes_B Q$ is a projective $A$-module. In particular,
for any $A$-module $M$, if $P_B(M)$ is projective cover of $M$ as $B$-module, then $ A\otimes_B P_B(M)$
is the projective cover of $M/IM$ as $A$-module. Now
let $\bar P=P_A(R^{k-1}/R^k)$  be the projective covers of
$R^{k-1}/R^k$  as $A$-modules,  and let $Q=P_B(r_B^{k-1}/r_B^k)$ be the
projective cover of $r_B^{k-1}/r_B^k$ as $B$-module.  Thus by induction assumption, $r_B^{k-1}/r_B^k\cong
R^{k-1}/R^{k}$ as $ A$-module. Then  we
have $\bar{P}\cong Q/IQ$ as $A$-modules  (by \cite{Lam} (pp. 363-364)).
 Let $ \pi: Q\rightarrow
Q/IQ$ being the quotient map, we have
\begin{eqnarray*}
R\bar{P}&\cong &\pi(r_B)\pi(Q)=\pi(r_BQ)\cong r_BQ/IQ ,\\
R^2\bar{P}&\cong &\pi(r_B^2)\pi(Q)=\pi(r_B^2Q)\cong r_B^2Q/IQ.
\end{eqnarray*}

Since $R=r_B/I\cong \rad A$,
 we have $R^k=RR^{k-1}\cong (\rad A)R^{k-1}=\rad
 R^{k-1}$, thus $\bar P=P_A(R^{k-1}/R^k)=P_A(R^{k-1}/\rad
 R^{k-1})=P_A(R^{k-1})$. Similarly, $r_B^k=\rad
 r_B^{k-1}$ and then, $Q=P_B(r_B^{k-1}/r_B^k)=P_B(r_B^{k-1})$.
 Here we are using the condition $I \subseteq r_B^2$.

Since $A$ is hereditary, $R$ is projective as $A$-module and
we have $P_A(R^{k-1})=R^{k-1}$. Similarly
 $P(r_B^{k-1})=r_B^{k-1}$ since $B$ is hereditary. Furthermore,
 $R^{k}\cong R\bar{P}$, $R^{k+1}\cong R^2\bar{P}$ as $ A$-modules and
 $r_B^k=r_BQ$ and $r_B^{k+1}=r_B^2Q$ as $B$-modules.
 Therefore we have the following isomorphisms of $A$-modules.
\[
 R^k/R^{k+1}=(R\bar P)/(R^2\bar P)
  \cong (r_BQ/IQ)/(r_B^2Q/IQ)\cong r_B^{k}/r_B^{k+1}.\]
If one tracks the isomorphism, one would find that the above
isomorphism is actually induced from the quotient map
$\pi:B \rightarrow A$ since $\bar{P}$ and $Q$
are subsets of $ A$ and $B$ respectively (both
algebras are hereditary).

Since $B$ is Artinian,
 we have $r_B^m=0$ for some $m$. If
 $ I\neq 0$, there exists $p\geq 1$ minimal such that $ I\not
 \subseteq r_B^p$. Then take $ x\in I\setminus r_B^p$. It means $ x\in r_B^{p-1}$
 and  $ \pi(x)=0 \in R^{p-1}$. But $\pi: r_B^{p-1}/r_B^{p}\rightarrow
 R^{p-1}/R^{p}$ is an isomorphism. Hence, the image of $ x$ in $
 r_B^{p-1}/r_B^p$ has to be zero, i.e., $ x \in r_B^{p}$ which contradicts
 the choice of $x$. Thus we must have $ I=0$. This proves the result.
 \end{proof}

\begin{theorem}\label{theorem6.3}
Let $A$ be a hereditary Artinian algebra splitting over radical such that the surjective
homomorphism $\pi: k(\Delta_A, \mathcal A)\rightarrow A$ in
Theorem \ref{th3.3} possesses the kernel $\ ker(\pi)\subseteq J^2$.
Then, $\pi$ is an isomorphism.
\end{theorem}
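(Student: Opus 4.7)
The plan is to reduce Theorem~\ref{theorem6.3} to a direct application of Proposition~\ref{prop6.3} with $B = k(\Delta_A,\mathcal{A})$ and the surjection $\pi$ supplied by Theorem~\ref{th3.3}. The substantive work is already encapsulated in Proposition~\ref{prop6.3}, whose proof uses heredity of both source and target to induct on radical layers and show that the quotient map induces isomorphisms on all associated graded pieces, at which point an admissibility assumption $\ker(\pi)\subseteq \mathrm{rad}(B)^2$ rules out any nonzero kernel element of minimal radical filtration. Consequently the task here is to verify the three hypotheses of Proposition~\ref{prop6.3} for $B$: that $B$ is hereditary, that $B$ is Artinian, and that $\mathrm{rad}(B) = J$, so that the admissibility hypothesis $\ker(\pi)\subseteq J^2$ in the theorem statement is literally the hypothesis $\ker(\pi)\subseteq \mathrm{rad}(B)^2$ of Proposition~\ref{prop6.3}.

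Each of these three checks is immediate from material already established. Heredity of $B$ follows from the remark just before Proposition~\ref{prop6.3}: a normal generalized path algebra is a tensor algebra over a semisimple base and is therefore always hereditary by \cite{A}. Artinianness of $B$ is obtained from Proposition~\ref{prop6.1}: since $A$ is hereditary Artinian, $\Delta_A$ is finite and acyclic, and combined with finite-dimensionality of each simple block $A_i$ of $A/r$, finiteness and acyclicity of $\Delta_A$ force $J^N = 0$ for any $N$ exceeding the length of the longest oriented path in $\Delta_A$. This in turn makes $B$ finite-dimensional. Since $J$ is nilpotent and $B/J \cong \bigoplus_i A_i$ is semisimple, the same reasoning used in the proof of Proposition~3.1 gives $\mathrm{rad}(B) = J$.

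With all three conditions verified, Proposition~\ref{prop6.3} applies to $\pi\colon B\to A$ and yields $\ker(\pi) = 0$, so $\pi$ is an isomorphism. I do not expect any genuinely new obstacle at this stage; the theorem is essentially an assembly statement, combining the existence of $\pi$ from Theorem~\ref{th3.3}, the transfer of the hereditary-Artinian property from $A$ to $B$ via Proposition~\ref{prop6.1}, and the graded-pieces induction packaged in Proposition~\ref{prop6.3}. The only point that would warrant care in the written proof is confirming that the $J$ appearing in the admissibility hypothesis is indeed the Jacobson radical of $B$, which is exactly what the verification of $\mathrm{rad}(B) = J$ provides.
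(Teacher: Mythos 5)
Your proof is correct and takes essentially the same route as the paper: reduce to Proposition~\ref{prop6.3} with $B = k(\Delta_A,\mathcal A)$, using Proposition~\ref{prop6.1} to get acyclicity and hence finite-dimensionality and $\operatorname{rad}(B) = J$, and the tensor-algebra remark for heredity of $B$. Your write-up is a bit more explicit than the paper's about which hypotheses of Proposition~\ref{prop6.3} are being checked, but the underlying argument is identical.
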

\begin{proof}
By Proposition \ref{prop6.1}, $\Delta_A$ is finite and acyclic. Then, $k(\Delta_A, \mathcal A)$ is finite-dimensional and $\rad k(\Delta_A, \mathcal A)=J$ the ideal generated by all $\mathcal A$-paths of length $1$. Thus, the condition in Proposition \ref{prop6.3} is satisfied for the natural homomorphism from $k(\Delta_A, \mathcal A)$ to $A$. It follows that $A\cong k(\Delta_A, \mathcal A)$.
\end{proof}

In Theorem \ref{theorem6.3}, $ A/r_A$ is not usually a
separable algebra. Hence, this result can be thought as an improvement of  \cite[Theorem 8.5.4]{DK}.

\end{document}